\newtheorem{thm}{Theorem}
\numberwithin{thm}{section}
\newtheorem{prop}[thm]{Proposition}
\newtheorem{defn}[thm]{Definition}
\newtheorem{lem}[thm]{Lemma}
\newcommand{\abs}[1]{| #1 |}
\newcommand{\trm}[1]{\textrm{#1}}
\newcommand{\al}{\alpha}
\newcommand{\tit}[1]{\textit{#1}}
\def\F{\mathbb{F}}
\newcommand{\lam}{\lambda}
\newcommand{\diag}{\textrm{diag}}
\begin{document}

\title{Matrix Waring Problem - II}

\author{Krishna Kishore}
\address{Indian Institute of Technology Tirupati, 517506, Andhra Pradesh, India}
\email{kishorekrishna@iittp.ac.in}
\thanks{The first author is partially supported by Science and Engineering Research Board (SERB) MATRICS grant MTR$/2021/000319$ of the Government of India.}
\author{Anupam Singh}
\address{Indian Institute of Science Education and Research Pune, 411008, Maharashtra, India}
\email{anupamk18@gmail.com}
\thanks{The second author is funded by SERB grant CRG/2019/000271 for this research.}

\begin{abstract}
We prove that for all integers $k \geq 1$, there exists a constant $C_k$ depending only on $k$ such that for all $q > C_k$ and for all $n \geq 1$ every matrix in $M_n(\F_q)$ is a sum of two $k$th powers.
\end{abstract}
\keywords{Waring problem, Lang-Weil estimate,  finite fields.}
\subjclass[2010]{11P05, 11G25.}


\maketitle

\section{Introduction}

The classical Waring problem deals with expressing natural numbers as a sum of $k$th powers, where $k$ is a positive integer. Modern versions consider the same question, but over objects with non-commutative structures. For example, Shalev \cite{Sh} showed that for every finite (nonabelian) simple group of sufficiently high order every element can be expressed as values of word $w$ of length $3$. This was later improved to $2$ by Larsen, Shalev and Tiep  \cite{LST}. Larsen conjectured (in a personal communication) that a similar result should hold for matrices over finite fields. In other words, if $R$ denotes a commutative ring with unity, then the Matrix Waring Problem is to address whether matrices over $R$ can be expressed as a sum of two $k$th powers (of matrices). The goal of this article is to answer this question in the case where $R$ is a finite field $\F_q$, with $q$ sufficiently large; see Theorem \ref{intro_main_thm_1}. 

This paper is a continuation of \cite{Ki}, which uses Lang-Weil's results on the number of solutions to equations over finite fields \cite{LW} to prove that for all integers $k \geq 1$, there exists a constant $C_k$ depending only on $k$ such that for all $q > C_k$ (i) for all $n = 1, 2$ every matrix in $M_n(\F_q)$ is a sum of two $k$th powers, and (ii) for all $n \geq 3$ every matrix in $M_n(\F_q)$ is a sum of \textit{at most} three powers. In this article, we strengthen this result and prove Larsen's conjecture:

\begin{thm}\label{intro_main_thm_1}
For all integers $k \geq 1$, there exists a constant $C_k$ depending only on $k$, such that for all $q > C_k$ and for all $k \geq 1$ every matrix in $M_n(\F_q)$ is a sum of two $k$th powers.
\end{thm}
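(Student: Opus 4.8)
The plan is to exploit two structural features of the property ``being a sum of two $k$th powers'': it is invariant under conjugation (if $M = U^k + W^k$ then $gMg^{-1} = (gUg^{-1})^k + (gWg^{-1})^k$), and it is closed under direct sums (if $M_i = U_i^k + W_i^k$ then $\bigoplus_i M_i = (\bigoplus_i U_i)^k + (\bigoplus_i W_i)^k$). By the rational canonical form it therefore suffices to realize a single primary block, namely the companion matrix $B = C_{f^e}$ of a prime power $f^e$ with $f \in \F_q[x]$ irreducible of degree $d$; I set $R = \F_q[B] \cong \F_q[x]/(f^e)$, a local ring with residue field $\F_{q^d}$ and nilpotent maximal ideal $\mathfrak{m}$, and write $t \in R$ for the image of $B$. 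I will take $C_k$ to be at least the scalar constant from the $n=1$ case of \cite{Ki}, so that for $q > C_k$ every element of every field $\F_{q^d}$ is already a sum of two $k$th powers; the entire difficulty is to propagate this \emph{single} constant to all $n$, since the block sizes $N = de$ produced above are unbounded.

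If $B$ is semisimple ($e = 1$), then $R = \F_{q^d}$ is a field and the scalar result over $\F_{q^d}$ (valid since $q^d \geq q > C_k$) writes $t = a^k + b^k$ with $a,b \in \F_{q^d} \subseteq M_d(\F_q)$; reassembling blocks settles the separable case with no new constant. When $e > 1$ but $p = \operatorname{char}\F_q$ does \emph{not} divide $k$, I would lift rather than solve afresh: fix a residue solution $\bar t = \bar a^k + \bar b^k$ with $\bar a \in \F_{q^d}^\times$, lift $\bar b$ to some $b \in R$, and note that $t - b^k$ is then a unit with residue $\bar a^k$; since $\gcd(k,p)=1$ the $k$th power map is bijective on $1 + \mathfrak{m}$, so $t - b^k$ has a $k$th root $u \in R$ and $t = u^k + b^k$ inside the commutative ring $R$. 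Hensel lifting introduces no field-size threshold, so the constant stays the scalar $C_k$ for all $d$ and $e$, uniformly in $n$ (with minor extra care for the purely nilpotent blocks $f = x$, where one needs a nondegenerate scalar representation of $0$).

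The remaining and genuinely hard case is $p \mid k$ together with $e > 1$, and I expect this to be the main obstacle. Here the commutative strategy provably fails: writing $k = p^a m$ with $p \nmid m$, the Frobenius $u \mapsto u^{p^a}$ is a ring endomorphism of $R$, so every sum of two $k$th powers lying inside $R$ is itself a $p^a$th power in $R$; yet the generator of $\F_q[x]/(x^e)$ is not a $p^a$th power once $e \geq 2$, as $p^a$th powers involve only the monomials $x^{p^a j}$. The same obstruction shows that even over $\overline{\F_q}$ a regular unipotent (or regular nilpotent) element is not a $k$th power, because any $k$th root must lie in its own commutative centralizer. Thus for $p \mid k$ one is forced to choose \emph{non-commuting} $U, W$, which is precisely where the non-commutativity of $M_N(\F_q)$ must finally be used; the $n = 1,2$ instances of this difficulty are already absorbed by \cite{Ki}, so only blocks of size $\geq 3$ remain at stake.

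For this case I would pass to the Lang--Weil count \cite{LW} applied to the affine variety $V_B = \{(U,W) \in M_N \times M_N : U^k + W^k = B\}$. The first task is to show $V_B$ is nonempty over $\overline{\F_q}$ — already nontrivial when $p \mid k$, and to be done by an explicit non-commuting model solution adapted to the Jordan--Frobenius structure of $B$ — and then to exhibit an absolutely irreducible $\F_q$-component of the expected dimension carrying a smooth point. The real difficulty, and the reason a direct application is useless, is uniformity: the degree, dimension, and number of variables controlling the Lang--Weil error all grow with $N$, so I cannot let $C_k$ depend on $n$. My plan to circumvent this is to replace $V_B$ by a subfamily of solutions of bounded geometric complexity — prescribing most of the entries of $U,W$ by an explicit recipe and letting only a slice of codimension and degree bounded purely in terms of $k$ vary — so that the resulting variety is smooth of controlled dimension and its Lang--Weil constant depends on $k$ alone. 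Establishing the absolute irreducibility (or merely a smooth $\F_q$-point) of such a bounded-complexity slice, uniformly over all prime powers $f^e$, is the crux on which the $n$-independence of $C_k$ ultimately rests.
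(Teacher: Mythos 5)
Your reduction to primary blocks and your two commutative cases are fine: the Hensel-lifting argument for $e>1$, $p\nmid k$, $f\neq x$ works (and is a genuinely different, cleaner route than the paper's explicit $2\times 2$-block construction for $J_{\alpha,n}$), and your observation that any sum of two $k$th powers taken inside the commutative ring $\F_q[B]$ is forced to be a $p^a$th power when $p\mid k$ correctly explains why non-commuting solutions are unavoidable there. But the proposal has two genuine gaps. First, the nilpotent blocks $f=x$ are not ``minor extra care'': a nondegenerate scalar representation $0=\bar a^k+\bar b^k$ with $\bar a\neq 0$ requires $-1$ to be a $k$th power in $\F_q$, which fails for infinitely many $q$ whenever $k$ is even (e.g.\ $k=2$ and $q\equiv 3\pmod 4$), and in that situation there is \emph{no} commutative solution at all: if $u^k+w^k=x$ in $\F_q[x]/(x^e)$ with $u$ a unit, then reducing modulo the maximal ideal gives $-1=(\bar w\,\bar u^{-1})^k$, while if both $u,w\in\mathfrak m$ then $u^k+w^k\in\mathfrak m^2$, which does not contain $x$. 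Since $C_k$ must work for \emph{all} $q>C_k$, this case cannot be legislated away, and it is precisely the case on which the paper spends its two core sections — so your proposal leaves the main difficulty unresolved even when $p\nmid k$. Second, for the cases you do flag as hard ($p\mid k$, $e\geq 2$), your Lang--Weil plan is left open exactly at its crux: you acknowledge that the naive variety has Lang--Weil constants growing with $n$, and the ``bounded-complexity slice'' whose absolute irreducibility would have to be established uniformly in $n$ is never produced. As it stands, the theorem is not proved.

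For contrast, the paper's resolution of the nilpotent case needs no uniform-in-$n$ point count and is entirely explicit. For $n\geq 2k$ it writes $J_{0,n}=A+\mathfrak J$, where $A$ is conjugate to $J_{0,n}^k$ (Miller's lemma identifying the Jordan type of a power of a nilpotent Jordan block) and $\mathfrak J$ is a ``junction matrix'' that is shown to be itself the $k$th power of a nilpotent matrix; this works over \emph{every} field, with no constant at all. The Weil-estimate construction (two semisimple summands whose eigenvalues are prescribed distinct $k$th powers, with a constant $C(k,n)$ depending on both $k$ and $n$) is then invoked only for the finitely many sizes $n<2k$, and $C_k$ is the maximum of those finitely many constants. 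That split — an explicit characteristic-free construction for all $n\geq 2k$, counting arguments only for the finitely many small $n$ — is the idea your sketch is missing for making $C_k$ independent of $n$.
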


A brief description of the strategy of the proof follows. Fix $k \geq 1$. Given a matrix  $Z$ in $M_n(\F_q)$, the goal is to find matrices $A$ and $B$ in $M_n(\F_q)$ such that $Z = A^k + B^k$. This may be viewed in the algebraic-geometric way as follows. The entries of $Z$ are given in terms of homogeneous polynomials in $2 n^2$ variables of $A$ and $B$ so that the above equation defines an affine variety over $\F_q$ defined by $n^2$ polynomials in $2n^2$ variables, and the problem reduces to that of proving that the affine variety has an $\F_q$-rational point.  Let us recall Lang-Weil's theorem on the number of points of varieties over finite fields \cite{LW}. If an affine variety $X$ over $\F_q$ is geometrically irreducible and of dimension $n$ then the number of $\F_q$-rational points is given by $q^n + O(q^{(n-1)/2})$, where the implicit constant in the error term depends on the number and the degree of the equations defining $X$. Note that absolutely irreducible means that the affine variety remains irreducible when considered over the algebraic closure of $\F_q$. To solve our problem, we use a consequence of Lang-Weil's theorem found in \cite{Sc} to prove that the particular equation $X_1^{k} + \ldots + X_n^k = 1$ has a rational solution $(x_1, \ldots, x_n)$ which is `special' in the sense that the $x_i^k$ are nonzero and mutually distinct; the $k$th powers $x_i^k$  are used in constructing semisimple matrices that are $k$th powers. 

We begin by observing that if a matrix is a sum of two $k$th powers, so is any of its conjugates. Furthermore, if each of the components of a direct sum of matrices is a sum of two $k$th powers so is their direct sum. These observations allow us to apply the theory of canonical forms of matrices and reduce the problem to that of expressing special Jordan matrices $J_{\alpha,n}$ associated to a primitive root $\alpha$ of $\F_q$ and the regular nilpotent matrices $J_{0,n}$ as a sum of two $k$th powers; see notation \S $2$. While the proof of expressing $J_{\alpha,n}$ as a sum of two $k$th powers is relatively easy, the proof of expressing nilpotent Jordan block $J_{0,n}$ takes some work. 

With these results in hand we construct the matrix $A$ (in $J_{0,n} = A^k + B^k$) with $x_i^k$ as the eigenvalues in $\F_q$, so that $A$ is diagonalizable and is also a $k$th power. Based on $A$ the matrix $B$ though forced (so that the sum $A^k + B^k$ is equal to $J_{0,n}$) is not entirely so. By carefully choosing some entries of $B$ we ensure that it is also diagonalizable and is a $k$th power. Thus we prove that for every $k \geq 1$ and for every $n \geq 1$ there exists a constant $C(k,n)$ depending on \tit{both} $k$ and $n$ such that for all $q > C(k,n)$ the $J_{0,n}$ is a sum of two $k$th powers. But we want a constant depending only on $k$ and not on $n$. So, we give an independent proof that $J_{0,n}$ is a sum of two (both nilpotent this time) $k$th powers for all $n \geq 2k$. Precisely, we prove the following result which is of independent interest.

\begin{thm}\label{theorem-power-nilpotent}
Let $\mathbb F$ be a field (not necessarily finite). Let $ k \geq 2$ and $n \geq 2k $. Then, $J_{0,n}$ is a sum of two $k$th powers in $M_n(\mathbb F)$. 
\end{thm}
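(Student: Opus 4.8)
The plan is to realize $J_{0,n}$ as $A^k+B^k$ with $A$ and $B$ nilpotent by splitting the superdiagonal of $J_{0,n}$ into two complementary patterns, each of which is the $k$th power of a nilpotent matrix. Write $J_{0,n}=\sum_{i=1}^{n-1}E_{i,i+1}$, and for a subset $S\subseteq\{1,\dots,n-1\}$ put $M_S=\sum_{i\in S}E_{i,i+1}$. If $S\sqcup S'=\{1,\dots,n-1\}$ is a partition, then $M_S+M_{S'}=J_{0,n}$ identically; thus it suffices to produce such a partition together with nilpotent matrices $A,B$ satisfying $A^k=M_S$ and $B^k=M_{S'}$. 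Since conjugation is compatible with the property of being a $k$th power — if $M=P^{-1}N^kP$ then $M=(P^{-1}NP)^k$ — it is enough to arrange that the Jordan type of each of $M_S$ and $M_{S'}$ coincides with the Jordan type of some $N^k$ for $N$ nilpotent.

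First I would record two elementary facts. (i) The matrix $M_S$ is nilpotent and its Jordan type is read off from the pattern of $S$: a maximal run of $\ell$ consecutive indices in $S$ contributes a Jordan block of size $\ell+1$, and each remaining index contributes a block of size $1$; equivalently $M_S$ has $n-|S|$ blocks. (ii) If a nilpotent $N$ has Jordan type $\lambda=(\lambda_1,\lambda_2,\dots)$, then $N^k$ has Jordan type obtained by replacing each part $\lambda_j$ by its $k$-fold split $\operatorname{split}_k(\lambda_j)$, namely (for $\lambda_j=qk+r$ with $0\le r<k$) by $r$ parts equal to $q+1$ and $k-r$ parts equal to $q$, discarding zero parts. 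Call a partition a $k$th power type if it arises this way for some $\lambda$. By fact (ii) and the conjugation remark, $M_S=A^k$ for some nilpotent $A$ exactly when the Jordan type of $M_S$ is a $k$th power type.

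Now the explicit choice. Let $n=qk+r$ with $q=\lfloor n/k\rfloor\ge 2$ (this is where the hypothesis $n\ge 2k$ enters) and $0\le r<k$. Take $S$ to be the pattern consisting of $k$ runs of lengths $p_1-1,\dots,p_k-1$ separated by single omitted indices, where $p_1,\dots,p_k$ are the parts of $\operatorname{split}_k(n)$; a count shows these runs and the $k-1$ gaps exactly fill $\{1,\dots,n-1\}$, so $M_S$ has Jordan type $\operatorname{split}_k(n)$, the type of $J_{0,n}^k$, and hence $M_S=A^k$ with $A$ conjugate to $J_{0,n}$. Because every part $p_i\in\{q,q+1\}$ is at least $2$, the complementary pattern $S'$ consists of $k-1$ isolated indices, so $M_{S'}$ has Jordan type $(2^{\,k-1},1^{\,n-2k+2})$. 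This is again a $k$th power type: it equals $\operatorname{split}_k(2k-1)\cup\operatorname{split}_k(1)^{\cup(n-2k+1)}$, since $\operatorname{split}_k(2k-1)=(2^{\,k-1},1)$. Hence $M_{S'}=B^k$ with $B$ conjugate to $J_{0,2k-1}\oplus J_{0,1}^{\oplus(n-2k+1)}$, and $A^k+B^k=M_S+M_{S'}=J_{0,n}$, with both $A$ and $B$ nilpotent.

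The routine parts are the two Jordan type computations in the previous paragraph; the point requiring care is fact (ii) together with the passage from ``same Jordan type'' to an honest equality $A^k=M_S$ — that is, checking that once the types match one can conjugate a chosen $k$th root into place so that the $k$th power equals $M_S$ on the nose rather than merely up to similarity. I expect this realization-by-conjugation step, and the correct bookkeeping of $\operatorname{split}_k$ under the constraint $n\ge 2k$, to be the only genuine obstacles; everything else is direct verification valid over an arbitrary field $\F$.
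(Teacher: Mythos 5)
Your proposal is correct and is essentially the paper's own proof in different notation: your $M_S$ is exactly the paper's block-diagonal matrix $A=\bigoplus_{k-m}J_{0,\lfloor n/k\rfloor}\oplus\bigoplus_{m}J_{0,\lceil n/k\rceil}$ (conjugate to $J_{0,n}^k$ by Miller's lemma, Lemma \ref{miller}), your $M_{S'}$ is exactly the paper's junction matrix $\mathfrak{J}$, and even your witness root $J_{0,2k-1}\oplus J_{0,1}^{\oplus(n-2k+1)}$ for $M_{S'}$ is the same matrix $B$ used in the proof of Lemma \ref{Lemma-junction-conjugate}. The realization step you flag as the delicate point is handled identically in both arguments: if $M=PN^kP^{-1}$ then $M=(PNP^{-1})^k$, so matching Jordan types yields an exact $k$th power, not merely one up to similarity.
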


By the first-proof of the representibility of $J_{0,n}$ we obtain finitely many constants $C(k,1), \ldots, C(k,2k-1)$. By Theorem \ref{theorem-power-nilpotent} we obtain a constant that works for all $n \geq 2k$. Thus, for all $q$ greater than the maximum of the \tit{finitely many} constants we obtain Theorem \ref{intro_main_thm_1} as required.

As for the non-nilpotent case, it is fairly straightforward to construct matrices $A$ and $B$ such that $J_{\alpha,n} = A^k + B^k$. Indeed we prove that the primitive element $\alpha$ can be expressed as $\alpha = a_1^k + b_1^k = a_2^k + b_2^k$ such that $a_1^k \neq a_2^k$ and $b_1^k \neq b_2^k$, and use this distinction to construct $A$ and $B$ which are, essentially, a direct sum of $2 \times 2$ matrices that are diagonalizable and are $k$th powers.

The paper is organized as follows. In \S \ref{Weil_results} we obtain consequences of the Weil's results on the number of solutions to equations over finite fields. In \S \ref{reduction} we reduce the problem to that of proving that the Jordan block associated to the primitive elements and the nilpotent Jordan block is a sum of two $k$th powers. Furthermore, we give quick proof of the non-nilpotent case. Sections  \S \ref{nilpotent_case_1} and \S \ref{nilpotent_case_2} form the essential content of this article. We prove $J_{0,n}$ is a sum of two $k$th powers that includes proving Theorem \ref{theorem-power-nilpotent}. In the final section \S \ref{main_result} we prove Theorem \ref{intro_main_thm_1} assembling various results in \cite{Ki}, \cite{Sm},  and the results  of the previous sections.

\section{Notation} \label{notation}
The $n \times n$ diagonal matrix with entries $\lam_1, \ldots, \lam_n$ along the diagonal is denoted by $\diag(\lam_1, \ldots, \lam_n)$. If $A$ is an $r \times r$ matrix and $B$ is an $s \times s$ matrix, the direct sum $A \oplus B$ is the $(r + s) \times (r + s)$ block diagonal matrix  $ \begin{bmatrix} A & 0 \\ 0 & B \end{bmatrix}$. A finite field with $q$ elements is denoted by $\F_q$.

Let $F(X_1, \ldots, X_n)$ be a polynomial in $n$ indeterminates $X_1, \ldots, X_n$. An $n$-tuple $(x_1, \ldots, x_n) \in \F_q^n$ such that $F(x_1, \ldots, x_n) = 0$ is called a \textit{solution of $F$ in $\F_q^n$.}  

Let $n \geq 1$ be a positive integer. Let $\lam$ be in a field $\mathbb F$. Let $J_{\lam,n}$ denote ($J$ for Jordan) the following matrix:
\begin{equation}\label{Jordan_lambda_0}
J_{\lambda,n} = 
\begin{bmatrix}
\lam & 1 & 0 & \cdots  & 0 \\
0 & \lam & 1 & \cdots  & 0\\
\vdots & \vdots & \ddots & \ddots & \vdots \\
0 & 0 & \cdots   & \lam &1 \\
0 & 0 & \cdots    & 0& \lam 
\end{bmatrix}_{n \times n},
\end{equation}
where the diagonal entries are all $\lambda$, the super-diagonal entries are all $1$ and the remaining entries are $0$.

Let $n \geq 1$ and $r \geq 1$ be positive integers. Let $f := x^n -a_{n-1} x^{n-1}- \ldots - a_1 x - a_0 \in \F[x]$. The Jordan block $J_{f,r}$ associated to $f$ with $r$ blocks is matrix,
\begin{equation}\label{Jordan_f}
J_{f,r} :=
\begin{bmatrix}
\mathfrak C_{f} & I & 0 & \ldots & 0 &0 \\
0 & \mathfrak C_{f} & I & \ldots & 0 & 0\\
0 & 0 & \mathfrak C_{f} & \ldots & 0 & 0\\
\vdots & \vdots & \vdots & \ddots & \vdots & \vdots \\
0 & 0 & 0 & \ldots & \mathfrak C_{f} & I \\
0 & 0 & 0 & \ldots & 0 & \mathfrak C_{f}
\end{bmatrix}_{n r  \times n r}
\mathfrak C_{f} := 
\begin{bmatrix} 
0 & 0 &  \ldots & 0 & a_0  \\ 
1 & 0 &  \ldots & 0 & a_1 \\ 
0 & 1 &  \ldots & 0 & a_2 \\
\vdots &\vdots  & \ddots &   & \vdots \\
0 & 0 & \cdots& 1 &  a_{n-1}
\end{bmatrix}_{n \times n}
\end{equation}
where $\mathfrak C_f$ is the companion matrix of $f$, and $I$ is the $n \times n$ identity matrix.



\section{Reducing the Problem}\label{reduction}

We begin by reducing the problem to that of representing two special matrices as a sum of two $k$th powers. Before that, let us recall the following elementary fact. Let $R$ and $S$ be rings and $\phi \colon  R  \to S$ be a ring homomorphism. For any integer $n \geq 1$, the canonical map $\Phi \colon M_n(R) \to M_n(S)$ induced by $\phi$ as $[a_{ij}] \mapsto [\phi(a_{ij})]$ is also a ring homomorphism, and injective if $\phi$ is injective. Also, recall that for all integers $m, n \geq 1$ the ring $M_m(M_n(R))$ is canonically isomorphic to the ring $M_{mn}(R)$. 

We are interested in the special case where $R = \F_{q^n} = \F_q(\alpha)$ and $S = M_n(\F_q)$, where $\alpha$ is a primitive element of $R$.  Consider the ordered basis $(1, \alpha, \ldots, \alpha^{n-1})$ of $R$ over $\F_q$. For $\beta \in R$, the multiplication-by-$\beta$ map $ \phi_{\beta}\colon x \mapsto \beta \cdot x$ is an $\F_q$-linear map, represented in the above basis by a matrix in $S$ giving the map $\phi\colon R \to S$. In particular, if $f(t) = t^n -a_{n-1} t^{n-1}- \ldots - a_1 t - a_0$ is minimal polynomial of $\alpha$ then $\alpha$ is mapped to $\mathfrak C_f$. Together with the canonical isomorphism  $M_{nd}(\F_q) \cong M_d(M_n(\F_q))$ we obtain a canonical embedding  $M_d(R) \hookrightarrow M_{nd}(\F_q)$ that maps $J_{\alpha,d}$ to $J_{f,d}$; see \S 2 for notation.  

\begin{prop} \label{reduction}
Fix an integer $C \geq 0$.  Let $k \geq 1$ be a positive integer. For all $q > C$, $n \geq 1$ and $\alpha$ any primitive element of $\F_q$, suppose that matrices  $J_{\alpha, n}$ and $J_{0,n}$ in $M_n(\F_q)$ are a sum of two $k$th powers. Then, for all $q > C$ and $n \geq 1$, every matrix in $M_n(\F_q)$ is a sum of two $k$th powers.
\end{prop}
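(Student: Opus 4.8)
The plan is to exploit the two structural observations flagged in the introduction — invariance under conjugation and compatibility with direct sums — to reduce an arbitrary matrix to a short list of canonical blocks, and then to feed those blocks into the hypothesis. First I would record the observations precisely. If $Z = A^k + B^k$ and $P$ is invertible, then $PZP^{-1} = (PAP^{-1})^k + (PBP^{-1})^k$, so being a sum of two $k$th powers is a similarity invariant; and since $(A_1 \oplus A_2)^k = A_1^k \oplus A_2^k$, if $M_1$ and $M_2$ are each sums of two $k$th powers then so is $M_1 \oplus M_2$. Next I would invoke the primary (generalized Jordan) canonical form: every matrix in $M_n(\F_q)$ is similar to a direct sum $\bigoplus_i J_{f_i, r_i}$ with each $f_i$ monic irreducible over $\F_q$, where $J_{f,r}$ is the block of \S\ref{notation}. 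By the two observations it therefore suffices to show that each such $J_{f,r}$ is a sum of two $k$th powers.

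I would then split on $f$. When $f = x$, the companion matrix $\mathfrak C_f$ is the $1 \times 1$ zero matrix and $J_{f,r} = J_{0,r}$, which is a sum of two $k$th powers by hypothesis. When $f \neq x$ is irreducible of degree $m$, let $\beta$ be a root of $f$, so that $\F_q(\beta) = \F_{q^m}$ and $\beta \neq 0$. Using the ring embedding $M_r(\F_{q^m}) \hookrightarrow M_{mr}(\F_q)$ constructed above, induced by the regular representation of $\F_{q^m}$ over $\F_q$, which sends $J_{\beta,r}$ to $J_{f,r}$ and — being a ring homomorphism — carries sums and $k$th powers to sums and $k$th powers, it suffices to express $J_{\beta,r}$ as a sum of two $k$th powers over $\F_{q^m}$. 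Since $q^m \geq q > C$, the hypothesis is available over $\F_{q^m}$ as well, so I may use it for $J_{\alpha',r}$ with $\alpha'$ a primitive element of $\F_{q^m}$ and for $J_{0,r}$.

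It remains to pass from a general nonzero eigenvalue $\beta$ to the primitive case supplied by the hypothesis, and here two moves are available. First, scaling: for a scalar $c$ one has $c^k J_{\alpha',r} = (cA)^k + (cB)^k$, and $c^k J_{\alpha',r}$ is similar to $J_{c^k \alpha', r}$ (a diagonal conjugation normalises the superdiagonal); hence every $\beta$ of the form $c^k \alpha'$ with $\alpha'$ primitive is handled. Second, if $\beta = \mu^k$ is itself a $k$th power and the characteristic does not divide $k$, then $(\mu I + J_{0,r})^k$ is a single Jordan block similar to $J_{\beta,r}$, so $J_{\beta,r} = (\mu I + J_{0,r})^k + 0^k$ is trivially a sum of two $k$th powers.

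I expect the main obstacle to be precisely this last step: verifying that these moves — scaling by $k$th powers, the Galois freedom in the choice of root $\beta$, and the $k$th-power trick — together reach every nonzero eigenvalue that can occur. This is a combinatorial question about the cosets of $(\F_{q^m}^*)^k$, and it is the delicate point of the reduction, since the cosets meeting neither the primitive elements nor the subgroup of $k$th powers are not obviously covered. In the large-$q$ regime it may instead be cleaner to establish directly, via the Weil estimates of \S\ref{Weil_results}, that each nonzero $\beta$ is a sum of two $k$th powers in two suitably independent ways, which is exactly the input that a direct construction of $A$ and $B$ with $J_{\beta,r} = A^k + B^k$ would require.
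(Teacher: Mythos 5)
Your reduction is exactly the paper's: conjugation invariance, compatibility with direct sums, the primary (generalized Jordan) decomposition into blocks $J_{f,r}$ with $f$ monic irreducible, the case split $f = t$ versus $f \neq t$, and the ring embedding $M_r(\F_{q^m}) \hookrightarrow M_{mr}(\F_q)$ carrying $J_{\beta,r}$ to $J_{f,r}$, with the hypothesis re-invoked over $\F_{q^m}$ because $q^m > C$. Up to that point nothing is missing, and all of your individual claims are correct.

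The gap is in the last step, and it is worth being precise about how the paper crosses it: not by construction but by interpretation. In the paragraph preceding the proposition, the paper fixes the convention that a ``primitive element'' of $R = \F_{q^m}$ means a field generator, $R = \F_q(\alpha)$. Under that reading, \emph{every} root $\beta$ of \emph{every} irreducible $f \neq t$ of degree $m$ over $\F_q$ is a primitive element of $\F_{q^m}$, so the hypothesis applies to $J_{\beta,r}$ verbatim and the proof ends there --- no scaling by $k$th powers, no $(\mu I + J_{0,r})^k$ trick, and no coset analysis of $(\F_{q^m}^*)^k$ occurs in the paper at all. Your worry is nonetheless a legitimate criticism of the statement: under the standard meaning of ``primitive'' (a generator of $\F_{q^m}^*$, or a generator over the prime field), the hypothesis would not reach all roots $\beta$, and would not even reach $J_{c,r}$ for non-primitive $c \in \F_q^*$, which arises from the degree-one irreducible $f(t) = t - c$. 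The resolution, however, lies in how the hypothesis is later verified rather than in the bridging moves you attempt: Proposition \ref{primitive_element} is proved via Proposition \ref{non_nilpotent_case}, which holds for \emph{every} nonzero $\lambda$, so $J_{\alpha,n}$ is in fact a sum of two $k$th powers for every nonzero $\alpha$, primitive or not --- this is precisely the Weil-estimate fallback you sketch in your final paragraph. So to complete your proof you should either adopt the paper's field-generator reading of ``primitive,'' or restate the hypothesis for all nonzero $\alpha$ (harmless, since that is what is actually verified when the proposition is applied); with either choice your argument closes immediately, and the scaling and $k$th-power tricks, whose coverage you correctly suspected to be insufficient, can be discarded.
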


\begin{proof}

Let $A \in M_n(\F_q)$. Then $A$ is a sum of two $k$th powers if, and only if, for all $P \in GL_n(\F_q)$ the conjugate $P A P^{-1}$ of $A$ is a sum of two $k$th powers; see Lemma $3.1$ of \cite{Ki}. Therefore, it suffices to prove that  some, hence any, choice of representative of the conjugacy classes of $A$ is a sum of two $k$th powers. It is well known that if $f(t)$ is the characteristic polynomial of $A$ with the factorization $\prod_{i =1}^k f_i(t)^{r_i}$, where $r_i \geq 1$ and $f_i(t)$ are irreducible polynomials over $\F_q$, then $A$ is conjugate to the block diagonal matrix $J_{f_1, r_1} \oplus  \ldots \oplus J_{f_k,r_k}$. Clearly, if each Jordan block $J_{f_i, r_i}$ is a sum of two $k$th powers, then so is their direct sum. Therefore, it suffices to prove that matrices of the form $J_{f,r}$ is a sum of two $k$th powers, where $r \geq 1$ and $f$ is, by abuse of notation, an irreducible polynomial over $\F_q$. If $f(t) = t$ with zero constant term then $J_{f, r}$ is the regular nilpotent matrix of size $r$, and it is a sum of two $k$th powers by hypothesis. On the other hand, by the preceding discussion before the theorem, the matrix $J_{f, r}$ in the case where $f(t) \neq t$ is irreducible over $\F_q$ and $r \geq 1$,  it is a sum of two $k$th powers if $J_{\alpha,r}$ is a sum of two $k$th powers, which indeed is the case by hypothesis.
\end{proof}

Thus we are reduced to showing that $n \times n$ matrices $J_{\alpha,n}$, $J_{0,n}$  are a sum of two $k$th powers. While the former case is almost immediate, it is the nilpotent case that takes some work.

First we prove that the Jordan block of the form $J_{\alpha,n}$ where $\al$ is a primitive element of $\F_q$ is a sum of two $k$th powers, for all sufficiently large $q$. The reader may notice that the following proposition holds for all $n \geq 2$, in particular including $n = 2$. So the result includes some of the results in \cite{Ki}. But not everything found in \cite{Ki} is subsumed by the results in this article. We will see that we still need results in \cite{Ki} in the characteristic $2$ case.

\begin{prop}\label{primitive_element}
For every integer $k \geq 1$ there exists a constant $C_k$ depending only on $k$ such that for all $q > C_k$ and $n \geq 2$ every matrix $J_{\alpha,n}$ is a sum of two $k$th powers,  where $\alpha$ is a primitive element of $\F_q$.
\end{prop}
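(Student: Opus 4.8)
The plan is to reduce the problem for $J_{\alpha,n}$ to a collection of $2\times 2$ blocks, each of which is a diagonalizable $k$th power, by first establishing the key arithmetic fact stated in the introduction: that the primitive element $\alpha$ admits two \emph{genuinely distinct} representations as a sum of two $k$th powers. Concretely, I would first prove that for $q$ sufficiently large (depending only on $k$) there exist $a_1,b_1,a_2,b_2 \in \F_q$ with
\begin{equation*}
\alpha = a_1^k + b_1^k = a_2^k + b_2^k, \qquad a_1^k \neq a_2^k, \quad b_1^k \neq b_2^k.
\end{equation*}
This is where the Lang--Weil / Schmidt-type input from \S\ref{Weil_results} is used: the number of representations of a fixed nonzero element as $x^k + y^k$ grows like $q$, so once $q > C_k$ there are far more than the boundedly many ``degenerate'' solutions (those with $a_1^k = a_2^k$ or $b_1^k=b_2^k$ forced), and two essentially different representations must exist. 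I would phrase this as a consequence of a counting statement already available in the earlier section.

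Next I would assemble a single $2\times 2$ diagonalizable $k$th power whose Jordan-type behaviour matches a $2\times 2$ Jordan block at eigenvalue $\alpha$. The idea is that the two distinct representations give me a matrix
\begin{equation*}
A = \begin{bmatrix} a_1 & \ast \\ 0 & a_2 \end{bmatrix}, \qquad B = \begin{bmatrix} b_1 & \ast \\ 0 & b_2 \end{bmatrix},
\end{equation*}
where the off-diagonal entries are chosen so that $A^k + B^k$ equals $J_{\alpha,2}$. Because $a_1^k \neq a_2^k$ (and likewise for $b$), the matrices $A$ and $B$ have distinct eigenvalues, hence are diagonalizable; and since each eigenvalue $a_i, b_i$ is a $k$th root of the prescribed power, each of $A,B$ is conjugate to a diagonal matrix of $k$th powers and is therefore itself a $k$th power. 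The distinctness conditions $a_1^k \neq a_2^k$, $b_1^k \neq b_2^k$ are exactly what guarantees that the superdiagonal $1$ of $J_{\alpha,2}$ can be produced: with equal powers the $(1,2)$ entry of $A^k+B^k$ would be forced to vanish, whereas distinctness lets the derivative-type terms in $(A^k)_{12}$ and $(B^k)_{12}$ combine to any prescribed value.

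For general $n$ I would then bootstrap from the $2\times 2$ case to all $n \geq 2$. The cleanest route is to build $A$ as a bidiagonal (upper-triangular) matrix with diagonal entries drawn from a supply of $k$th roots $a_1, a_2, \ldots$ whose $k$th powers are \emph{mutually distinct and nonzero} — this is precisely the ``special'' solution to $X_1^k + \cdots + X_n^k = $ (a unit) produced by the Schmidt consequence quoted in the introduction — so that $A$ is regular semisimple and hence a $k$th power, and then solve for $B$ so that $A^k + B^k = J_{\alpha,n}$; the distinctness of the diagonal powers of $A$ makes the linear system for the entries of $B$ solvable and simultaneously forces $B$ to have distinct eigenvalues, hence to be diagonalizable and a $k$th power as well. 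The main obstacle, and the step I expect to require the most care, is the last one: verifying that the ``forced'' matrix $B$ (once $A$ is fixed) can be arranged to be diagonalizable and a genuine $k$th power, rather than merely a matrix with the right power. The mutual distinctness of the eigenvalues of $A$ is the lever that makes this work, but checking that the resulting $B$ indeed has simple spectrum and that each of its eigenvalues is a $k$th power in $\F_q$ (again using that $q > C_k$ so enough $k$th powers are available) is the delicate bookkeeping that the proof must handle.
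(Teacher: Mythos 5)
Your opening moves are sound and coincide with the paper's: the existence of two representations $\alpha = a_1^k+b_1^k = a_2^k+b_2^k$ with $a_1^k \neq a_2^k$ and $b_1^k \neq b_2^k$ is exactly Proposition \ref{non_nilpotent_case} (with constant $k^{16}$, depending only on $k$), and your $2\times 2$ construction works. The genuine gap is in your bootstrap to general $n$. You propose to give $A$ a full set of $n$ mutually distinct nonzero $k$th-power eigenvalues, coming from a special solution of $X_1^k+\cdots+X_n^k = $ (a unit); but that is Proposition \ref{weil_nilpotent_case}, whose constant $C(k,n)$ depends on both $k$ and $n$, and this dependence is unavoidable in your approach: $\F_q^{\times}$ contains only $(q-1)/\gcd(k,q-1)$ distinct $k$th powers, so any construction demanding $n$ mutually distinct nonzero $k$th-power eigenvalues forces $q$ to grow at least linearly in $n$. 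Since the proposition requires a single constant $C_k$ valid for \emph{all} $n \geq 2$, your strategy can only yield the weaker, $n$-dependent statement (the analogue of Proposition \ref{C_depend_k_n}). A second, lesser gap is the step you yourself flag: showing that the ``forced'' matrix $B$ is diagonalizable with eigenvalues that are $k$th powers in $\F_q$ is precisely where the work lies; in the paper's nilpotent argument this consumes a second special solution, a determinant computation (Lemma \ref{ele_com}), and a linear system whose determinant must be shown nonzero. You leave it as bookkeeping, so even for fixed $n$ the proof is not complete.

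The paper's own proof sidesteps both problems with a much lighter construction that uses only the four elements $a,b,c,d$ from the two representations, no matter how large $n$ is. It forms $G_n$ as a direct sum of blocks $\begin{bmatrix} a^k & 1 \\ 0 & c^k \end{bmatrix}$ and $H_n$ as a direct sum of blocks $\begin{bmatrix} d^k & 1 \\ 0 & b^k \end{bmatrix}$, padded by $1\times 1$ blocks $[b^k]$, $[d^k]$, $[a^k]$ according to the parity of $n$, with the two block decompositions offset by one index. The offset makes the superdiagonal $1$'s of $G_n$ and $H_n$ interleave to fill the entire superdiagonal, while the diagonals add to $a^k+b^k = c^k+d^k = \alpha$ everywhere, so $G_n + H_n = J_{\alpha,n}$ exactly; each $2\times 2$ block has distinct eigenvalues ($a^k \neq c^k$, resp.\ $d^k \neq b^k$), hence is conjugate to a diagonal matrix of $k$th powers and is itself a $k$th power, and therefore so are $G_n$ and $H_n$. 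This is what makes the constant independent of $n$. If you want to repair your write-up, replace the general-$n$ bootstrap by this block-interleaving idea; your $2\times 2$ case already contains the needed seed.
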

\begin{proof}

By Proposition \ref{non_nilpotent_case}, given $k \geq 2$ there exists a constant $C_k$ depending only on $k$ such that for all $q > C_k$ there exists pairs $(a,b)$ and $(c,d)$ in $\F_q \times \F_q$ such that $a^k \neq c^k$ and $b^k \neq d^k$ and $a^k + b^k = c^k + d^k = \alpha$. For these two $2$-tuples we define the following matrices. 

For $n \geq 2$ even, let 
$$
G_n = \bigoplus_{n /2} \begin{bmatrix} a^k &  1\\ 0 & c^k \end{bmatrix} \trm{ and } H_n = [b^k] \oplus \left( \bigoplus_{(n-2)/2} \begin{bmatrix} d^k & 1 \\ 0 &b^k \end{bmatrix} \right) \oplus [d^k].
$$
For $n \geq 3$ odd, let 
$$
G_n = \left( \bigoplus_{(n-1)/2} \begin{bmatrix} a^k & 1 \\ 0 & c^k \end{bmatrix} \right) \oplus [a^k] \trm{ and } H_n = [b^k] \bigoplus \left( \bigoplus_{(n-1)/2} \begin{bmatrix} d^k & 1 \\ 0 & b^k \end{bmatrix} \right).
$$
For example, the matrices $G_5$ and $H_5$ corresponding to $n = 5$ have the following form: 
$$
G_5 = 
\begin{bmatrix}
a^k & 1 &  &  &  \\
  & c^k  &  &   &  \\
& & a^k & 1 &  \\
& & & c^k &  \\
& & & & a^k \\
\end{bmatrix}
\textrm{ and }
H_5 = 
\begin{bmatrix}
b^k &  & & & \\
  & d^k  &  1& &  \\
& &   b^k &   & \\
& & &  d^k & 1\\
& & &  & b^k \\
\end{bmatrix}.
$$
Clearly $\begin{bmatrix} a^k & 1 \\  & c^k \end{bmatrix}$ is diagonalizable because it has distinct eigenvalues $a^k$ and $c^k$, so it is conjugate to $\begin{bmatrix} a^k &  \\  & c^k \end{bmatrix}$, visibly a $k$th power. Similarly $\begin{bmatrix} d^k & 1 \\  & b^k \end{bmatrix}$  is diagonalizable because it has distinct eigenvalues $d^k$ and $b^k$, so it is conjugate to a diagonal matrix $\begin{bmatrix} d^k &  \\  & b^k \end{bmatrix}$ which is a $k$th power too. Since representation of a matrix as a sum of two $k$th powers is stable under direct sum of matrices, it follows that $G_n$ and $H_n$ are $k$th powers and therefore their sum is represented as a sum of two $k$th powers for all $q > C_k$.
\end{proof}
We consider the remaining and more difficult case in the next section, namely that of representing a nilpotent matrix as a sum of two $k$th powers. 

\section{Nilpotent Case- I}\label{nilpotent_case_1}

An $n \times n$ matrix with $0$'s at or below the main diagonal, and all nonzeros in the diagonal immediately above the main diagonal is conjugate to $J_{0,n}$. Therefore it suffices to show that  $J_{0,n}$ is a sum of two $k$th powers. The following result is a \tit{weaker} one, in the sense that the constant given by this result depends on \textit{both} $n$ and $k$, while we desire to have a constant depending only on $k$ which we will achieve in the next section.

\begin{prop}\label{C_depend_k_n}
For every integer $k \geq 1$ and for every $n \geq 3$ there exists a constant $C(k,n)$ depending on both $k$ and $n$ such that for all $q > C(k,n)$ with the characteristic of $\F_q$ not $2$, the matrix $J_{0,n}$ is a sum of two $k$th powers where at least one of them is diagonalizable.
\end{prop}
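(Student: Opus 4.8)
The plan is to realize $J_{0,n} = A^k + B^k$ by prescribing the two summands through their spectra. I would look for $S := A^k$ diagonalizable with $n$ distinct nonzero eigenvalues, each a $k$th power in $\F_q$; such an $S$ is automatically a $k$th power (diagonalize and extract $k$th roots eigenvalue-by-eigenvalue) and supplies the required diagonalizable summand. The complementary matrix $T := J_{0,n} - S = B^k$ is then determined once $S$ is fixed, so the whole problem becomes: can $S$ be chosen, within the large family of matrices with the prescribed spectrum, so that $T$ is \emph{also} a $k$th power? I would force this by arranging $T$ to have $n$ distinct nonzero $k$th-power eigenvalues as well, which makes $T$ diagonalizable and hence a $k$th power.

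The concrete construction I would use is to take $S$ \emph{lower triangular}, with diagonal entries $\lam_1, \dots, \lam_n$ (distinct nonzero $k$th powers) and free entries only in the first column below the diagonal. Since $S$ is lower triangular with distinct diagonal, its eigenvalues are exactly $\lam_1, \dots, \lam_n$ regardless of those free entries, so $S$ stays diagonalizable and a $k$th power. The matrix $T = J_{0,n} - S$ then has diagonal $-\lam_1, \dots, -\lam_n$, unit super-diagonal, free entries in the first column, and zeros elsewhere; in particular it is lower Hessenberg with nonzero super-diagonal. Expanding $\det(xI - T)$ shows that the entry $T_{i,1}$ first affects the coefficient of $x^{n-i}$, so the free first-column entries enter the characteristic polynomial through a \emph{triangular} linear system and can be solved for one at a time to make $\det(xI - T) = \prod_{j=1}^n (x - \mu_j)$ for any prescribed $\mu_1, \dots, \mu_n$, the only rigid constraint being the trace identity $\sum_j \mu_j = \operatorname{tr}(T) = -\sum_i \lam_i$. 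This is precisely the ``carefully choosing some entries of $B$'' step. Taking the $\mu_j$ distinct nonzero $k$th powers makes $T$ a diagonalizable $k$th power, and $J_{0,n} = A^k + B^k$ follows.

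What makes the argument work, and where the constant $C(k,n)$ comes from, is the existence of the eigenvalue data: I need distinct nonzero $k$th powers $\lam_1, \dots, \lam_n$ and distinct nonzero $k$th powers $\mu_1, \dots, \mu_n$ satisfying the single linear relation $\sum_i \lam_i + \sum_j \mu_j = 0$. This is exactly a ``special'' solution of a diagonal equation $X_1^k + \dots + X_m^k = c$ of the type furnished by the consequence of Weil's theorem proved in \S\ref{Weil_results}: for $q$ large the relevant Fermat-type hypersurface is geometrically irreducible with $\sim q^{m-1}$ points, while the loci where a coordinate vanishes or two coordinates collide have strictly smaller dimension, so a solution avoiding all of them exists once $q$ exceeds a bound depending on $k$ and on the number of variables $m = 2n$. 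This dependence of the threshold on $n$ is exactly why the present constant depends on both $k$ and $n$; removing the $n$-dependence is deferred to Theorem \ref{theorem-power-nilpotent}.

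The main obstacle I anticipate is this Weil-existence step rather than the linear algebra: one must certify that a special solution meeting all the nonvanishing and within-group distinctness conditions really exists for every $q > C(k,n)$, and it is here that the characteristic $\neq 2$ hypothesis is used, as in the inputs of \S\ref{Weil_results}. It is worth recording why the naive choice ``$S$ diagonal'' does not suffice: a diagonal $S$ forces $T$ to be upper triangular with eigenvalues $-\lam_i$, which would require each $-\lam_i$ to be a $k$th power, i.e.\ $-1 \in (\F_q^{\times})^k$, and this fails in general. The off-diagonal first-column entries are introduced precisely to decouple the spectrum of $T$ from that of $S$ and thereby circumvent this obstruction.
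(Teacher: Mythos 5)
Your construction is correct and is, at heart, the same strategy as the paper's: both summands are engineered to be diagonalizable with distinct eigenvalues that are $k$th powers (hence are themselves $k$th powers), the eigenvalue data comes from the Weil-type counting in the appendix, and the undetermined entries are recovered by equating characteristic polynomials through a triangular linear system. The implementations differ, and yours is in one respect cleaner. The paper takes $A_n$ to be the transpose of a companion matrix (free last row) and $B_n$ a companion-like matrix with free last column, so that $A_n+B_n$ is not $J_{0,n}$ itself but a strictly upper triangular nilpotent matrix with superdiagonal $(2,\ldots,2,1)$; one then conjugates back to $J_{0,n}$, which is one of the two places where the characteristic $\neq 2$ hypothesis is used, and computing the characteristic polynomial of $B_n$ needs a dedicated determinant lemma (Lemma \ref{ele_com}). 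In your version $S+T=J_{0,n}$ on the nose, the spectrum of the lower-triangular $S$ is untouched by the free first-column entries, the expansion $\det(xI-T)=\prod_{j}(x+\lam_j)-\sum_{i\ge 2}T_{i,1}\prod_{j>i}(x+\lam_j)$ exhibits the triangular system directly, and the characteristic $\neq 2$ hypothesis enters only through the counting argument (where the collision conditions $x_i^k=x_j^k$ create a coefficient $2$). A further cosmetic difference: the paper lets the second summand have $0$ as an eigenvalue (spectrum $0,\beta_1^k,\ldots,\beta_{n-1}^k$), while you keep both spectra entirely nonzero; either choice works.

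The one step you should repair is the source of the eigenvalue data. You reduce existence to a special solution of the single equation $X_1^k+\cdots+X_{2n}^k=0$, i.e.\ with right-hand side $c=0$; but the appendix results (Theorem \ref{weil_thm} and Proposition \ref{weil_nilpotent_case}) are stated only for \emph{nonzero} right-hand side, and for $c=0$ the variety is a cone through the origin whose geometric irreducibility (true for at least $3$ variables and $p\nmid k$, after the usual reduction) you would have to establish separately before invoking Lang--Weil. The painless fix --- and it is exactly what the paper does --- is to decouple the trace constraint: choose $(x_1,\ldots,x_n)$ with $\sum_i x_i^k=1$ and $(y_1,\ldots,y_n)$ with $\sum_j y_j^k=-1$ by two applications of Proposition \ref{weil_nilpotent_case} (right-hand sides $1$ and $-1$), and set $\lam_i=x_i^k$, $\mu_j=y_j^k$, so that $\sum_i\lam_i+\sum_j\mu_j=0$. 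That proposition supplies precisely the within-group distinctness and nonvanishing you need, and coincidences between the two groups are irrelevant to your argument; with this substitution your proof is complete, with $C(k,n)$ the constant coming from those two applications.
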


\begin{proof}

For a given $k \geq 1$ and $n \geq 3$ we begin with considering matrices $A_n$ and $B_n$ of the following form. Let $y_1, \ldots, y_{n-1}$ and $z_1, \ldots, z_{n-1}$ be some indeterminate variables that will be assigned value in $\F_q$ later in the proof. Let $A_n$ be the matrix whose last row is $y=[y_1 \; y_2 \; \ldots \; y_{n-1} \; 1]$, with $1$'s immediately above the diagonal (also called super-diagonal) and $0$s everywhere else; that is $A_n$ is of the form
$$
\begin{bmatrix}
0 & \vline & I_{n-1} \\
\hline
 & y & \\
\end{bmatrix}.
$$

\noindent Let $B_n$ be the matrix with the last row $[-y_1 \; -y_2 \; \ldots \; -1 ]$ and the last column $[z_1 \; z_2 \; \ldots \;  z_{n-2} \; 0 \; -1]^T$ and with $1$s along the super-diagonal except at the $((n-1), n)$-th position; that is $B_n$ is of the form
$$
\begin{bmatrix}
0 & \vline & I_{n-2} & \vline & z \\
\hline 
& & 0 & & \\
\hline
 & & -y & \\
\end{bmatrix}.
$$

\noindent The sum of $A_n$ and $B_n$ is the matrix with $[z_1 \; z_2 \; \ldots z_{n-2} \; 1 \; 0]^T$ as the last column, and entries $2, 2, \ldots, 2,  1$ along the super-diagonal and $0$'s everywhere else. By hypothesis the characteristic of $\F_q$ is not $2$, so $A_n + B_n$ is conjugate to $J_{0,n}$.  Therefore, it suffices to show that $A_n$ and $B_n$ are $k$th powers for appropriate choices of $y_i$ and $z_j$.

In constructing the matrices $A_n$ and $B_n$ we want them each to have $n$ distinct eigenvalues, each of which is a $k$th power in $\F_q$. Since such a matrix is semisimple, it is a $k$th power. First we show that $A_n$ is a $k$th power for an appropriate choices of $y_i$. Note that $A_n$ is the transpose of the companion matrix of the characteristic polynomial $t^n - t^{n-1} - y_{n-1} t^{n-2} - y_{n-2} t^{n-3} - \ldots - y_2 t - y_1$, so its characteristic polynomial is the same. By Proposition \ref{weil_nilpotent_case}, there exists a solution $(\alpha_1, \ldots, \alpha_n) \in \F_q^n$ to $X_1^k + \ldots + X_n^k = 1$ such that $\alpha_i \neq 0$ for all $1 \leq i \leq n$ and $\alpha_i^k \neq \alpha_j^k$ for all $1 \leq i, j \leq n$ and $i \neq j$. Note that the requirement $\sum_{i =1}^n X_i^k = 1$  corresponds to the condition that we want to set of coefficient of $t^{n-1}$ in the characteristic polynomial of $A_n$ to be $-1$. We want the characteristic polynomial of $A_n$ be equal to the polynomial
$$
(t - \alpha_1^k) \cdots (t - \alpha_n^k)
$$
so that we \textit{have} to define $y_j := (-1)^{n - j} S_{n-j+1}(\alpha_1^k, \ldots, \alpha_n^k)$, the $j$-th elementary symmetric polynomial in $\alpha_j^k$. Running the argument backwards, we see that with these choices of $y_j$, the characteristic polynomial of $A_n$ is
$$
t^n - t^{n-1} - y_{n-1} t^{n-2} - \ldots - y_1  = (t - \alpha_1^k) \cdots (t - \alpha_n^k).
$$
On the other hand, since $\alpha_i^k \neq \alpha_j^k$ for all $i \leq j$,  it follows that $A_n$ has $n$ mutually distinct eigenvalues in $\F_q$, so it is conjugate to the diagonal matrix $\diag(\alpha_1^k, \ldots, \alpha_n^k)$ which is visibly a $k$th power. Therefore $A_n$, being conjugate to a $k$th power, is a $k$th power.

Now we show that $B_n$ is a $k$th power for an appropriate choice of $z_i$s. By Lemma \ref{ele_com}, the characteristic polynomial of $B_n$ is 
\begin{align*}
&t^n + t^{n-1} \\
&+ (y_1 z_1 + y_2 z_2 + \ldots + y_{n-2} z_{n-2}) t^{n-2}  \\
&+ (y_1 z_2 + y_2 z_3 + \ldots + y_{n-3} z_{n-2}) t^{n-3} \\
&+ \ldots \\
&+ (y_1 z_{n-3}  + y_2 z_{n-2} ) t^{2} \\
&+ (y_1 z_{n-2} ) t.
\end{align*}
For the above choice of $k$ and $n$, again by Proposition \ref{weil_nilpotent_case} there exists a constant $C(n-1,k)$ depending on both $n$ and $k$ such that for all $q > C(n-1,k)$ there exists an $(n-1)$-tuple $(\beta_1, \ldots, \beta_{n-1}) \in \F_q^{n-1}$ such that $(\beta_1, \ldots, \beta_{n-1})$ is a solution to 
$$
X_1^k + \ldots + X_{n-1}^k = -1.
$$ 
Again, note that this requirement corresponds to the condition that we want to set of coefficient of $t^{n-1}$ in the characteristic polynomial of $B_n$ to be $1$. Moreover, $\beta_i^k \neq \beta_j^k$ for all $1 \leq i, j \leq n-1$ and $i \neq j$, and $\beta_i \neq 0$ for all $1 \leq i \leq n-1$. Thus we have a solution $(0, \beta_1, \ldots, \beta_{n-1}) \in \F_q^n$ such that the $k$th powers of the coordinates are mutually distinct and $\beta_i$ are nonzero. 
Now, we force the characteristic polynomial of $B_n$ to be 
$
t (t - \beta_1^k) \cdots (t - \beta_{n-1}^k)
$
as follows.
Let $s_i$ denote the $i$th elementary symmetric polynomial in $\beta_1^k, \ldots, \beta_{n-1}^k$. Equating the coefficients, we obtain the following relations
\begin{align*}
y_1 z_1 + y_2 z_2 +y_3 z_3  \ldots  + y_{n-2} z_{n-2} &= s_2 \\
y_1 z_2 + y_2 z_3 + \ldots + y_{n-2} z_{n-2}  & =- s_3 \\
\vdots & \\
y_1 z_{n-3} + y_2 z_{n-2} &= (-1)^{n-2} s_{n-2} \\
y_1 z_{n-2} &= (-1)^{n-1}s_{n-1}.
\end{align*}
The coefficient matrix of the linear system above has nonzero determinant, in fact equal to $y_1^{n-1}$ (here is where we use that $y_1$ is nonzero), so it has a unique solution $[z_1 \; \ldots \; z_{n-1}]^{T}$. With these choices of $z_i$, it follows that the characteristic polynomial of $B_n$ is equal to $t(t - \beta_1^k) \cdots (t - \beta_{n-1}^k)$. Since the $k$th powers of $\beta_i$ are mutually distinct and nonzero it follows that, including the $0$ as an eigenvalue, $B_n$ has $n$ mutually distinct eigenvalues in $\F_q$, so it  is conjugate to the diagonal matrix $\diag(0, \beta_1^k, \ldots, \beta_{n-1}^k)$ which is visibly a $k$th power. 
\end{proof}

We reiterate for the sake of clarity that the above Proposition \ref{C_depend_k_n} is distinct from Theorem \ref{intro_main_thm_1}; the constant $C(k,n)$ depends on both $k$ and $n$, while the constant $C_k$ in Theorem \ref{intro_main_thm_1} depends only on $k$. We now prove that for all $n$ greater than a constant depending on $k$,  the  $J_{0,n}$ is a sum of two $k$th powers. The proof is independent of Proposition \ref{C_depend_k_n}. So we are not actually giving two proofs of the same result, the constant in Proposition \ref{C_depend_k_n} depends on $k$ and $n$ while the following result is one about representibility of $J_{0,n}$ for all $n \geq 2k$. Based on these two results we obtain a constant that is independent of $n$ but depends only on $k$.

\section{Nilpotent Case-II}\label{nilpotent_case_2}

Results in this section are valid for all fields, not necessarily finite. Let $\mathbb F$ be any field and  $M_n(\mathbb F)$ be the ring of $n\times n$ matrices with entries in $\mathbb F$. Let $k \geq 2$ be a positive integer. Our goal is to express $J_{0,n}$ as a sum of two $k$th powers. In contrast with Proposition \ref{C_depend_k_n} which expresses the regular nilpotent matrix $J_{0,n}$ as a sum of $k$th powers of \textit{semisimple} elements of $M_{n}(\F_q)$, the main result of this section expresses $J_{0,n}$ as a sum of two $k$th powers of \textit{nilpotent} matrices. By Proposition \ref{reduction} it suffices to consider the regular nilpotent matrix $J_{0,n}$.

For a real number let $\lfloor x \rfloor$ denote the largest integer less than or equal to $x $ and $\lceil x\rceil$ denotes the smallest integer greater than or equal to $x$. Consider Lemma $8.1$ of \cite{KS} attributed to Miller and quoted here for sake of clarity.
\begin{lem}[Miller]\label{miller}
Let $n,k \geq 1$ be positive integers, and suppose $n > k$. Let $m$ be $n$ congruent modulo $k$ with $0 \leq m \leq k-1$. Then the $k$-th power $J_{0,n}^k$ is conjugate to 
$$\left( \bigoplus_{k-m} J_{0,\lfloor\frac{n}{k}\rfloor} \right) \bigoplus \left( \bigoplus_{m} J_{0,\lceil\frac{n}{k}\rceil}\right).$$
\end{lem}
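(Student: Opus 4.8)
The plan is to understand the action of $J_{0,n}^k$ as a nilpotent operator and determine its Jordan type directly, since conjugacy classes of nilpotent matrices are classified by the sizes of their Jordan blocks (equivalently, by the partition of $n$ recording these sizes). Let $N = J_{0,n}$ act on $\mathbb F^n$ with standard basis $e_1, \ldots, e_n$, normalized so that $N e_i = e_{i-1}$ for $i \geq 2$ and $N e_1 = 0$ (the transpose convention is harmless up to conjugacy). Then $N^k e_i = e_{i-k}$ whenever $i > k$ and $N^k e_i = 0$ whenever $i \leq k$. The key observation is that $N^k$ simply shifts the index down by $k$, so the orbits of the basis vectors under repeated application of $N^k$ are exactly the residue classes of indices modulo $k$. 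I would make this precise by decomposing $\{1, \ldots, n\}$ according to residues $\bmod\ k$.

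The main step is the orbit/chain count. For each residue class $r \in \{1, \ldots, k\}$, the basis vectors with index congruent to $r$ form a single Jordan chain under $N^k$: these indices are $r, r+k, r+2k, \ldots$ up to $n$, and $N^k$ sends each to the previous one in the chain, with the smallest killed. The length of this chain is the number of indices in $\{1, \ldots, n\}$ congruent to $r$ modulo $k$. Writing $n = k \lfloor n/k \rfloor + m$ with $0 \leq m \leq k-1$, a direct count shows that exactly $m$ of these residue classes have $\lceil n/k \rceil$ elements and the remaining $k - m$ have $\lfloor n/k \rfloor$ elements: the classes whose smallest representative is at most $m$ pick up one extra element. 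Each chain of length $\ell$ spans an $N^k$-invariant subspace on which $N^k$ acts as a single nilpotent Jordan block $J_{0,\ell}$, and since $\mathbb F^n$ is the direct sum of these invariant subspaces, $N^k$ is conjugate to the direct sum of the corresponding blocks. This yields exactly
$$
\left( \bigoplus_{k-m} J_{0,\lfloor n/k \rfloor} \right) \bigoplus \left( \bigoplus_{m} J_{0,\lceil n/k \rceil}\right),
$$
as claimed.

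I expect the only genuinely delicate point to be the bookkeeping that identifies precisely which residue classes are long and which are short, and confirming the multiplicities $m$ and $k-m$ rather than merely getting the right block sizes; a clean way to handle this is to index residues by their least positive representative $r \in \{1, \ldots, k\}$ and note that the class of $r$ has $\lceil n/k \rceil$ elements if and only if $r \leq m$ (interpreting $m = 0$ as no long classes), which follows from the Euclidean division. Everything else — the shift formula for $N^k$, the invariance of each chain-span, and the fact that $N^k$ restricted to a chain is a regular nilpotent block — is routine. Since the statement is purely about conjugacy of nilpotent matrices and makes no reference to the field, the argument is characteristic-free and works over an arbitrary $\mathbb F$, as required.
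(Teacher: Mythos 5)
Your proof is correct. Note, however, that the paper itself gives no proof of this lemma: it is quoted verbatim from Lemma 8.1 of the cited work of Kundu and Singh (where it is attributed to Miller), so there is no argument in the paper to compare against. What you supply is the standard self-contained proof: with the convention $J_{0,n}e_i = e_{i-1}$ (which, given the paper's superdiagonal definition of $J_{0,n}$, is in fact exactly its action, not a transpose), the operator $J_{0,n}^k$ shifts indices down by $k$, the basis vectors split into $k$ chains indexed by residues $r \in \{1,\dots,k\}$, and the chain through $r$ has $\lceil n/k\rceil$ elements precisely when $r \le m$ and $\lfloor n/k\rfloor$ elements otherwise, since the number of integers in $\{1,\dots,n\}$ congruent to $r$ is $\lfloor (n-r)/k\rfloor + 1$. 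Reordering the basis chain by chain conjugates $J_{0,n}^k$ into the stated direct sum, and the argument is visibly characteristic-free. This makes your write-up a useful addition rather than a duplication: it replaces an external citation by a two-paragraph elementary argument, at no cost in generality.
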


\begin{defn} 
For a positive integer $n \geq 1$, the $r$-tuple $(n_1, \ldots, n_r)$, for $r \geq 1$, is a partition of $n$ if $1 \leq n_1 \leq \ldots \leq n_r$ and $n = n_1 + \ldots + n_r$. The coordinates $n_i$ are called {\normalfont parts of the partition}. The {\normalfont Junction matrix} associated to the partition $(n_1, \ldots, n_r)$ is the matrix $$
\mathfrak J_{(n_1, \ldots, n_r)} = e_{n_1, n_1+1}+ e_{(n_1+n_2), (n_1+n_2+1)}+\cdots + e_{(n_1+\ldots+n_{r-1}), (n_1+\ldots+n_{r-1}+1)}
$$
where $e_{i,j}$ denotes the matrix with $1$ at ij-th place and $0$ elsewhere.
\end{defn}
For example, corresponding to the partition $(1,1,2,2)$ and $(2,2,2)$ of $6$, the junction matrices are
$$\newcommand*{\tempz}{\multicolumn{1}{c|}{0}}
\left[\begin{array}{cccccc}
\tempz & 1 & 0 & 0 &0 &0  \\ \cline{1-2}
\tempz & \tempz &1 & 0 &0 &0 \\ \cline{2-4}
0&\tempz&0&\tempz &0&0\\
0 & \tempz & 0& \tempz & 1 &0 \\ \cline{3-6}
0 & 0& 0& \tempz & 0&0 \\
0 & 0& 0& \tempz& 0&0 
\end{array}\right], \left[\begin{array}{cccccc}
0&\tempz & 0 & 0 &0 &0  \\ 
0 & \tempz &1 & 0 &0 &0 \\ \cline{1-4}
0&\tempz&0&\tempz &0&0\\
0 & \tempz & 0& \tempz & 1 &0 \\ \cline{3-6}
0 & 0& 0& \tempz & 0&0 \\
0 & 0& 0& \tempz& 0&0 
\end{array}\right] .$$

Clearly the junction matrices are nilpotent. When these are $k$th power is determined as follows: 
\begin{lem}\label{Lemma-junction-conjugate}
For a positive integer $k \geq 1$, let $(n_1, \ldots, n_k)$ be a partition of $n$. Suppose $n \geq 2k$ and $n_i \geq 2$ for all $1 \leq i \leq r$. Then $\mathfrak J_{(n_1, \ldots, n_k)}$ is a $k$th power. 
\end{lem}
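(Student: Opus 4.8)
The plan is to identify the nilpotent Jordan type of the junction matrix and then produce, via Miller's Lemma~\ref{miller}, an explicit nilpotent matrix whose $k$th power realizes that same type. Since being a $k$th power is preserved under conjugation, matching Jordan types will finish the proof. The case $k=1$ is degenerate: there are then no junctions, $\mathfrak{J}_{(n_1)} = 0$, which is trivially a first power, so assume $k \geq 2$.

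First I would pin down the structure of $N := \mathfrak{J}_{(n_1,\ldots,n_k)}$. Writing $p_i = n_1 + \cdots + n_i$, the matrix $N$ has exactly $k-1$ nonzero entries, all equal to $1$, sitting at the positions $(p_i, p_i+1)$ for $1 \leq i \leq k-1$; these occupy distinct rows and distinct columns, so the rank of $N$ equals $k-1$. The hypothesis that every part is at least $2$ gives $p_{i+1}-p_i = n_{i+1} \geq 2$, so $p_i+1$ never equals any $p_j$; hence no two junctions chain together and $N^2 = 0$. A nilpotent matrix with $N^2 = 0$ and rank $k-1$ has Jordan type consisting of exactly $k-1$ blocks of size $2$ together with $n - 2(k-1) = n-2k+2$ blocks of size $1$; that is, $N$ is conjugate to $\bigl(\bigoplus_{k-1} J_{0,2}\bigr) \oplus \bigl(\bigoplus_{n-2k+2} J_{0,1}\bigr)$.

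Next I would exhibit a $k$th root. Consider $M = J_{0,2k-1} \oplus 0$, where $0$ is the zero matrix of size $n-(2k-1) = n-2k+1$, which is nonnegative since $n \geq 2k$. Because $2k-1 > k$ for $k \geq 2$, Miller's Lemma~\ref{miller} applies with block size $2k-1$; here the residue of $2k-1$ modulo $k$ is $m = k-1$, with $\lfloor (2k-1)/k\rfloor = 1$ and $\lceil (2k-1)/k\rceil = 2$, so $J_{0,2k-1}^k$ is conjugate to $J_{0,1} \oplus \bigl(\bigoplus_{k-1} J_{0,2}\bigr)$, of type $(2^{k-1},1)$. The zero summand contributes $n-2k+1$ further blocks of size $1$, so $M^k$ has Jordan type $(2^{k-1},1^{\,n-2k+2})$, exactly that of $N$ found above. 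Consequently $N$ and $M^k$ are conjugate, say $N = PM^kP^{-1} = (PMP^{-1})^k$, exhibiting the junction matrix as a $k$th power.

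The one genuinely delicate step is the type computation in the second paragraph: the identity $N^2=0$ (hence all blocks of size at most $2$) is precisely where the assumption $n_i \geq 2$ is used, and one must check that the $k-1$ junction entries are independent so that the rank is indeed $k-1$. Once the type $(2^{k-1},1^{\,n-2k+2})$ is established, the choice of the single block $J_{0,2k-1}$—tailored so that Miller's lemma outputs exactly $k-1$ blocks of size $2$—makes the remaining argument routine bookkeeping, with $n \geq 2k$ guaranteeing that the complementary zero block has nonnegative size.
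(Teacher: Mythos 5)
Your proof is correct and takes essentially the same route as the paper: both arguments identify the Jordan type of the junction matrix as $k-1$ blocks $J_{0,2}$ plus $n-2k+2$ blocks $J_{0,1}$, and then realize that type as the $k$th power of $J_{0,2k-1}$ padded with zero blocks, invoking Miller's lemma exactly as you do. The only cosmetic difference is that you deduce the type from $\mathrm{rank}(N)=k-1$ together with $N^2=0$, whereas the paper exhibits an explicit reordering of the standard basis; both steps use the hypothesis $n_i \geq 2$ in the same essential way.
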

\begin{proof} 
The junction matrix $\mathfrak J_{(n_1, n_2, \ldots, n_k)}$ associated to the partition $(n_1, \ldots, n_k)$ may be viewed as the matrix of the endomorphism $T$ of $\mathbb F^n$ in the standard basis, say $(e_1, \ldots, e_n)$; in particular, it maps $e_{(n_1+\ldots + n_i)+1}$ to $e_{n_1+\ldots + n_i}$ for all $1 \leq i \leq k-1$ and maps other basis vectors to $0$. Consider the following basis obtained by reordering $(e_1, \ldots, e_n)$:
\begin{eqnarray*}
&(e_1,\ldots, e_{n_1-1}, e_{n_1+2}, \ldots, e_{(n_1+n_2-1)}, e_{(n_1+n_2+2)}, \ldots, \\ & e_{(n_1+\ldots +n_{k-1})-1}, e_{(n_1+\ldots+n_{k-1})+2},  \ldots, e_{(n_1+n_2+\ldots + n_k)}, \\&  
 e_{n_1},  e_{n_1+1}, e_{n_1+n_2}, e_{(n_1+n_2 +1)} \ldots, e_{(n_1+\ldots +n_{k-1})}, e_{(n_1+\ldots+n_{k-1})+1}).
\end{eqnarray*}
Here we put the tuple of basis vectors $e_{n_1+\ldots +n_{i}}, e_{n_1+\ldots+n_{i}+1}$ for $1\leq i \leq k-1$ at the end (since $n_i\geq 2$) and all others in the beginning. The matrix of $T$ in this basis is
$$
A:=\displaystyle \left( \bigoplus_{n-2(k-1)} J_{0,1} \right) \bigoplus \left(\bigoplus_{k-1} J_{0,2} \right),
$$
where $J_{0,1}$ is the $1 \times 1$ matrix $[0]$.
Now we use the hypothesis $n \geq 2k$; let 
$$
B :=  \left( \bigoplus_{n-(2k-1)} J_{0,1} \right)  \bigoplus  J_{0,2k-1}.
$$
By Lemma~\ref{miller}, $B^k$ is conjugate to $A$ which in turn is conjugate to $\mathfrak J_{(n_1, n_2, \ldots, n_k)}$ so it is a $k$th power. 
\end{proof}
The condition $n_i \geq 2$ in Lemma \ref{Lemma-junction-conjugate} is useful. Indeed, if some $n_i < 2$ then junction matrix has a Jordan block of size $> 2$. For example, $\mathfrak J_{(1,1,1,3)}=$ 
$$\newcommand*{\tempz}{\multicolumn{1}{c|}{0}}
\left[\begin{array}{cccccc}
\tempz & 1 & 0 & 0 &0 &0  \\ \cline{1-2}
\tempz & \tempz &1 & 0 &0 &0 \\ \cline{2-3}
0&\tempz&\tempz& 1 &0&0\\ \cline{3-6} 
0 & 0 & \tempz& 0& 0 &0 \\ 
0 & 0& \tempz& 0 & 0&0 \\
0 & 0& \tempz& 0& 0&0 
\end{array}\right], A= \left[\begin{array}{cccccc}
0&\tempz & 0 & 0 &0 &0  \\ 
0 & \tempz &1 & 0 &0 &0 \\ \cline{1-6}
0&\tempz&0&1 &0&0\\
0 & \tempz & 0& 0 & 1 &0 \\ 
0 & \tempz& 0& 0 & 0&1 \\
0 & \tempz& 0& 0 & 0&0 
\end{array}\right] = J_{0,1}\oplus J_{0,1}\oplus J_{0,4}$$
where $A$ is conjugate of $\mathfrak J_{(1,1,1,3)}$ with respect to the ordered basis $(e_5, e_6, e_1,e_2,e_3,e_4)$. Getting such matrices as a $k$th power may not be always possible. Another illustration of the above result, the junction matrix associated to the partition $(2,2,2,3)$ and its conjugate $A$ (where $A$ is as defined in the proof) are $\mathfrak J_{(2,2,2,3)}=$ 
$$ \newcommand*{\tempz}{\multicolumn{1}{c|}{0}}
\newcommand*{\tempo}{\multicolumn{1}{c|}{1}}
\left[\begin{array}{ccccccccc}
0&\tempz  & 0 & 0 &0 &0 &0&0&0 \\ 
0&\tempz  &1 & 0 &0 &0 &0&0&0\\ \cline{1-4}
0&\tempz&0&\tempz &0&0&0&0&0\\
0 & \tempz & 0& \tempz & 1 &0&0&0&0 \\ \cline{3-6}
0&0&0&\tempz &0&\tempz&0&0&0\\
0 & 0 & 0& \tempz & 0 &\tempz&1&0&0 \\ \cline{5-9}
0 & 0& 0& 0& 0&\tempz&0&0&0\\
0 & 0& 0& 0& 0&\tempz&0&0&0\\
0 & 0& 0& 0& 0&\tempz &0&0&0
\end{array}\right], A= \left[\begin{array}{ccccccccc}
0&0  & \tempz & 0 &0 &0 &0&0&0 \\ 
0&0  &\tempz & 0 &0 &0 &0&0&0\\ 
0&0&\tempz & 0&0&0&0&0&0\\\cline{1-5}
0 & 0 & \tempz&0 & \tempo &0&0&0&0 \\ 
0&0&\tempz &0&\tempz&0&0&0&0\\\cline{4-7}
0 & 0 & 0& 0 & \tempz &0&\tempo&0&0 \\ 
0 & 0& 0& 0& \tempz &0&\tempz&0&0\\\cline{6-9}
0 & 0& 0& 0& 0&0&\tempz &0&1\\
0 & 0& 0& 0& 0&0&\tempz&0&0
\end{array}\right].$$

Now we prove the main theorem of this section.

\begin{proof}
(of Theorem \ref{theorem-power-nilpotent}) We are given $k\geq 2$ and $n\geq 2k$. Let $m$ be the integer such that $ m \equiv n \pmod{k}$ and $0 \leq m < k$. Let 
$$
A :=  \left( \bigoplus_{k-m} J_{0,\lfloor\frac{n}{k}\rfloor} \right) \bigoplus  \left( \bigoplus_{m} J_{0,\lceil\frac{n}{k}\rceil} \right).
$$
It follows from Lemma \ref{miller} that $A$ is conjugate to  $J_{0,n}^k$, so $A$ is an $k$th power. Now we observe that $J_{0,n} = A + \  \mathfrak J$, where $\mathfrak J$ is a junction matrix associated to the following partition of $n$: 
$$\left( \underbrace{\lfloor\frac{n}{k}\rfloor, \ldots, \lfloor\frac{n}{k}\rfloor}_{k-m \trm{ terms }} , 
\underbrace{\lceil\frac{n}{k}\rceil, \ldots , \lceil\frac{n}{k}\rceil}_{m \trm{ terms }} \right).$$ 
Note that the parts of the partition $\lfloor\frac{n}{k}\rfloor$ and $\lceil\frac{n}{k}\rceil$ are at least $2$ since $n\geq 2k$, so by  Lemma~\ref{Lemma-junction-conjugate}, $\mathfrak J$ is an $k$th power too. Therefore $J_{0,n}$ is a sum of two $k$th powers.
\end{proof}

\section{Main Result}\label{main_result}

In this section we assemble results of 
\cite{Sm} and \cite{Ki} and various results in the previous section to prove the main result of this article.

\begin{thm}
For all integers $k \geq 1$, there exists a constant $C_k$ depending only on $k$ such that for all $q > C_k$ and for all $n \geq 1$ every matrix in $M_n(\F_q)$ is a sum of two $k$th powers.
\end{thm}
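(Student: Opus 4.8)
The plan is to feed the two reduction results into Proposition~\ref{reduction} and then, crucially, to collapse the $n$-dependence appearing in the intermediate statements into a single constant depending on $k$ alone. By Proposition~\ref{reduction} it suffices to exhibit a constant $C_k$, depending only on $k$, such that for every $q > C_k$, every $n \geq 1$, and every primitive element $\al$ of $\F_q$, both the non-nilpotent block $J_{\al,n}$ and the regular nilpotent block $J_{0,n}$ are sums of two $k$th powers. The case $k=1$ is immediate, since any $Z = Z^1 + 0^1$; so I fix $k \geq 2$ and treat the two families separately.

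For the non-nilpotent blocks the work is essentially done: Proposition~\ref{primitive_element} already supplies a constant depending only on $k$ that handles $J_{\al,n}$ for all $n \geq 2$ simultaneously. The only residual case is $n=1$, where $J_{\al,1}=[\al]$ and the assertion is exactly that the primitive element $\al \in \F_q$ is a sum of two $k$th powers in $\F_q$. This is the one-dimensional Waring problem over $\F_q$, which I would import from \cite{Ki} (its $n=1$ statement) together with \cite{Sm}; the resulting threshold again depends only on $k$.

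For the nilpotent blocks the governing observation is a trichotomy in $n$. When $n \geq 2k$, Theorem~\ref{theorem-power-nilpotent} expresses $J_{0,n}$ as a sum of two $k$th powers over an arbitrary field, hence with no constraint on $q$ whatsoever; this is precisely the ingredient that tames the infinite tail of values of $n$. The finitely many remaining values, $1 \leq n \leq 2k-1$, I would dispose of one at a time: $n=1$ is trivial, since $J_{0,1}=0^k+0^k$; for $3 \leq n \leq 2k-1$ in characteristic $\neq 2$, Proposition~\ref{C_depend_k_n} gives a constant $C(k,n)$; and the case $n=2$ (in every characteristic) together with the characteristic-$2$ instances of $3 \leq n \leq 2k-1$ I would cover by appealing to \cite{Ki} and \cite{Sm}. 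Since this is a finite set of values of $n$, taking the maximum of the finitely many thresholds so produced yields a single constant depending only on $k$.

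Assembling these, I would set $C_k$ to be the maximum of the constant from Proposition~\ref{primitive_element}, the constants $C(k,n)$ for $3 \leq n \leq 2k-1$ from Proposition~\ref{C_depend_k_n}, and the thresholds drawn from \cite{Ki} and \cite{Sm} for the scalar case and the small-$n$ exceptional cases; Theorem~\ref{theorem-power-nilpotent} contributes nothing to this maximum. The main obstacle is conceptual rather than computational: each intermediate result carries a constant depending on \emph{both} $k$ and $n$, and the whole point is to ensure uniformity in $n$. This hinges on the two uniform inputs---Proposition~\ref{primitive_element} for the semisimple blocks and, above all, Theorem~\ref{theorem-power-nilpotent} for all large $n$---which confine the genuinely $n$-dependent estimates to a finite range of small $n$ where a maximum is harmless. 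The one place demanding extra care is characteristic $2$, where the $A_n+B_n$ splitting behind Proposition~\ref{C_depend_k_n} breaks down (the super-diagonal entries $2$ are no longer units), and this is exactly where the external results of \cite{Ki} and \cite{Sm} become indispensable.
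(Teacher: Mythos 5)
Your proposal is correct and is, in substance, the paper's own proof: the same reduction via Proposition \ref{reduction}, the same two uniform-in-$n$ inputs (Proposition \ref{primitive_element} for the blocks $J_{\alpha,n}$ and Theorem \ref{theorem-power-nilpotent} for $J_{0,n}$ with $n \geq 2k$), the same use of Proposition \ref{C_depend_k_n} on the finitely many values $n < 2k$, the same appeals to \cite{Sm} and \cite{Ki} for the scalar case, the case $n=2$, and characteristic $2$, and the same final maximum over finitely many thresholds. The one organizational difference is how characteristic $2$ is routed, and it hides a subtle point. The paper never runs the reduction in characteristic $2$: it disposes of all of $M_n(\F_q)$ with $n \geq 3$ and $p=2$ at once via Theorem $1.2$ of \cite{Ki} (in characteristic $2$, $-1=1$ is a $k$th power), and only afterwards, for $p \neq 2$, invokes Proposition \ref{reduction}. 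You instead run the reduction in every characteristic and patch only the \emph{nilpotent} blocks in characteristic $2$, leaving $J_{\alpha,n}$ in characteristic $2$ resting on Proposition \ref{primitive_element}. That is consistent with that proposition as stated, but its proof depends on Proposition \ref{non_nilpotent_case}, which carries the hypothesis that the characteristic is not $2$; the paper's own proof of the main theorem explicitly notes that Proposition \ref{primitive_element} is valid only away from characteristic $2$. The repair is already in your hands: the same result of \cite{Ki} you invoke for the char-$2$ nilpotent blocks in fact covers \emph{every} matrix in characteristic $2$ for $n \geq 3$, hence in particular $J_{\alpha,n}$ --- which is precisely how the paper arranges the cases. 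With that one-line adjustment your argument coincides with the paper's.
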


\begin{proof}
We consider various cases. The symbol $p$ below denotes the characteristic of $\F_q$. \\

\noindent \textbf{(Case $k \geq 1$, $n =1$):} The result is due to Small \cite{Sm}. The constant $C_1$ obtained depends only on $k$, in fact it is $k^4$.\\

\noindent \textbf{(Case $k \geq 1$, $n =2$):} The result is due to Kishore; see Theorem $1.1$ of \cite{Ki}. The constant $C_{2}$ in this case depends only on $k$. \\

\noindent \textbf{(Case $k \geq 1$, $n \geq 3$, $p = 2$):} The result is due to Kishore;  see Theorem $1.2$ of \cite{Ki}  and the remark immediately after its proof [page 93. \cite{Ki}]. Indeed, the proof of Theorem $1.2	$ in \cite{Ki} shows that for $q$ sufficiently large, if $-1$ is a $k$th power then every $n \times n$ matrix over $\F_q$ is the sum of two $k$th powers. Since this is true for $p=2$, this case follows. The constant $C_{3}$ depends only on $k$; indeed, the proof of Theorem $1.2$ immediately implies that the constant is in fact the one given by the more special case $n =2$ above.\\

\noindent \textbf{(Case $k \geq 1$, $n \geq 3$, $p \neq 2$):} By Proposition \ref{reduction} with the choice of constant $C = 0$, it suffices to restrict our attention  to matrices of the form $J_{\alpha,n}$ where $\alpha$ is a primitive element of $\F_q$, and also to the regular nilpotent square matrix of size $n$. By Proposition \ref{primitive_element}, which is valid for field of characteristic not $2$, it follows that there exists a constant $C_{4}$ that depends only on $k$ such that for all $q > C_{4}$ the matrix $J_{\al,n}$ is a sum of two $k$th powers. On the other hand, by Theorem \ref{theorem-power-nilpotent}, for every $n \geq 2k$, every $J_{0,n}$ is a sum of two $k$th powers, so there are only finitely many exceptions, namely those positive integers $n$ strictly less than $2k$. But, by Theorem \ref{C_depend_k_n} for each such $n$ there exists a respective constant $C(k,n)$ such that $J_{0,n}$ is a sum of two $k$th powers. Let $C_5$ be the maximum of the constants
$$
C_5 = \trm{max}(C_4, C(k,1), C(k,2), \ldots, C(k,2k-1)).
$$
Together with Theorem \ref{theorem-power-nilpotent}, it follows that  for all $q > \textrm{max}(C_5,2k)$, the Jordan nilpotent matrix $J_{0,n}$ is a sum of two $k$th powers as desired.

Let $C_6$ be the maximum of $C_1, C_2, C_3, C_5$ and $2k$; $C_6$ depends only of $k$. Then for $q > C_6$ and for all $n \geq 1$ every matrix is a sum of two $k$th powers. 

\end{proof}

\appendix
\section{Equations over Finite Fields}\label{Weil_results}

Let $k \geq 1$ be a positive integer. In this section we prove that the equation $X_1^k + \ldots + X_n^k = \lambda$, where $\lambda$ is a nonzero element in $\F_q$, has sufficiently many $\F_q$-rational solutions, especially those solutions $(x_1, \ldots, x_n)$ whose $k$th powers $x_i^k$ are nonzero and mutually distinct. Eventually $x_i^k$ serve as eigenvalues of matrices we construct later.

We use Lang-Weil's result on the number of solutions to equations over finite fields \cite{LW}. For an introduction to solutions of equations over finite fields the reader may refer to \cite{Sc}. 
\begin{thm}\label{weil_thm} (Weil)
Let $k > 0$ be a positive integer, and consider the following polynomial in $\F_q[X_1, \ldots, X_n]$:
$$
F(X_1, \ldots, X_n) := a_1X_1^{k} +  a_2 X_2^{k} + \ldots + a_n X_n^{k}- 1,
$$
where $a_i \neq 0$ for all $1 \leq i \leq n$. Let $N$ be the number of solutions  in $\F_q^n$ of the equation $F(X_1, \ldots, X_n) = 0$. Then 
$$
\abs{N - q^{n-1}} \leq k^n \sqrt{q^{n-1}} \left( \frac{q}{q-1} \right)^{n/2}.
$$
Since $q/(q-1) \leq 2$, it follows that if $k \geq 2$ then 
\begin{equation}\label{weil_inequality}
\abs{N - q^{n-1}} \leq k^{2n} \sqrt{q^{n-1}}.
\end{equation}
\end{thm}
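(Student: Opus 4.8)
The plan is to count $N$ by the standard additive-character method and reduce everything to Gauss-sum estimates. Fix a nontrivial additive character $\psi$ of $\F_q$. Orthogonality gives $\frac{1}{q}\sum_{t\in\F_q}\psi(tc)=1$ if $c=0$ and $0$ otherwise, so
$$
N=\frac{1}{q}\sum_{t\in\F_q}\psi(-t)\prod_{i=1}^{n}T(ta_i),\qquad T(c):=\sum_{x\in\F_q}\psi(c x^{k}).
$$
The term $t=0$ contributes $\frac{1}{q}\cdot q^{n}=q^{n-1}$, the main term; the entire content of the theorem is that the remaining sum over $t\neq 0$ is of order $q^{(n-1)/2}$ with the stated constant.

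Second, I would evaluate $T(c)$ for $c\neq 0$ in terms of multiplicative characters. Let $d=\gcd(k,q-1)$, so that the $k$-th power map on $\F_q^{*}$ is $d$-to-$1$ onto its image and, for $u\in\F_q^{*}$, the number of $x$ with $x^{k}=u$ equals $\sum_{\chi^{k}=\epsilon}\chi(u)$, the sum ranging over the $d$ multiplicative characters $\chi$ with $\chi^{k}=\epsilon$. Substituting and separating the trivial character (whose contribution cancels the $x=0$ term) gives
$$
T(c)=\sum_{\substack{\chi^{k}=\epsilon\\ \chi\neq\epsilon}}\overline{\chi}(c)\,g(\chi),\qquad g(\chi):=\sum_{u\in\F_q^{*}}\chi(u)\psi(u),
$$
where $g(\chi)$ is the Gauss sum. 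The one quantitative input I need is the identity $g(\chi)\overline{g(\chi)}=q$ for $\chi\neq\epsilon$, hence $\abs{g(\chi)}=\sqrt{q}$; this is elementary and no deep form of the Weil conjectures is required for diagonal equations. In particular $\abs{T(c)}\leq (d-1)\sqrt{q}$.

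Third, I would substitute this expansion into the product and exploit cancellation in the $t$-sum. Expanding $\prod_i T(ta_i)$ over $n$-tuples $(\chi_1,\dots,\chi_n)$ of nontrivial characters with $\chi_i^{k}=\epsilon$, the dependence on $t$ collects into $\overline{(\chi_1\cdots\chi_n)}(t)$, so
$$
N-q^{n-1}=\frac{1}{q}\sum_{(\chi_1,\dots,\chi_n)}\Big(\prod_{i=1}^{n}\overline{\chi_i}(a_i)\,g(\chi_i)\Big)\sum_{t\neq 0}\psi(-t)\,\overline{(\chi_1\cdots\chi_n)}(t).
$$
The inner sum over $t$ equals $-1$ when $\chi_1\cdots\chi_n=\epsilon$ and a Gauss sum of absolute value $\sqrt{q}$ otherwise, so it is at most $\sqrt{q}$ in modulus in every case. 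Using $\abs{g(\chi_i)}=\sqrt{q}$ and counting the $(d-1)^{n}$ admissible tuples yields $\abs{N-q^{n-1}}\leq \frac{1}{q}\cdot(d-1)^{n}\cdot q^{n/2}\cdot\sqrt{q}=(d-1)^{n}q^{(n-1)/2}$. Since $d\leq k$, this is at most $k^{n}\sqrt{q^{n-1}}$, which already implies the stated bound because the factor $(q/(q-1))^{n/2}\geq 1$ only enlarges the right-hand side; and for $k\geq 2$ the simplified inequality \eqref{weil_inequality} follows from $q/(q-1)\leq 2$ together with $k^{n}2^{n/2}\leq k^{2n}$.

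The step I expect to be the main obstacle is the third one, namely securing the sharp exponent $(n-1)/2$ rather than $n/2$. A naive triangle inequality applied to $\frac{1}{q}\sum_{t\neq 0}\prod_i\abs{T(ta_i)}$ gives only $O(q^{n/2})$, losing a full factor of $\sqrt{q}$; the improvement comes precisely from retaining the oscillation of $\psi(-t)$ against the product of characters and evaluating the resulting sum over $t$ as one further Gauss sum. Care is also needed to confirm that the character-counting constant is controlled uniformly by $d=\gcd(k,q-1)\leq k$, and to treat the degenerate tuples with $\chi_1\cdots\chi_n=\epsilon$ separately, though these contribute a strictly smaller error of size $q^{n/2-1}$.
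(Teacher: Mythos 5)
Your argument is correct, but it takes a genuinely different route from the paper: the paper does not prove this theorem at all, it simply quotes it as a trivial modification of Theorem 5A of Schmidt \cite{Sc} [p.~160], and the only actual work in the paper's proof is the final manipulation $\left(q/(q-1)\right)^{n/2}\le(\sqrt{2})^{n}\le k^{n}$, which you perform in exactly the same way. What you supply instead is the standard self-contained character-sum proof of the quoted estimate: orthogonality of a nontrivial additive character $\psi$ to detect $F=0$; the identity $\#\{x:x^{k}=u\}=\sum_{\chi^{k}=\epsilon}\chi(u)$ for $u\ne 0$ to convert $T(c)=\sum_{x}\psi(cx^{k})$ into a combination of at most $d-1$ Gauss sums, where $d=\gcd(k,q-1)$; the elementary modulus computation $\abs{g(\chi)}=\sqrt{q}$; and, as you correctly single out, the evaluation of the remaining $t$-sum as one further Gauss sum (or as $-1$ in the degenerate case $\chi_1\cdots\chi_n=\epsilon$) rather than a triangle-inequality estimate, which is precisely what saves a factor of $\sqrt{q}$ and produces the exponent $(n-1)/2$. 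All of these steps check out. In fact your bound $\abs{N-q^{n-1}}\le (d-1)^{n}q^{(n-1)/2}$ is sharper than the stated one: it carries no $\left(q/(q-1)\right)^{n/2}$ factor, and $(d-1)^{n}\le k^{n}$, so the stated inequality (and hence \eqref{weil_inequality}) follows since that factor is $\ge 1$. What the citation buys the paper is brevity; what your proof buys is independence from \cite{Sc} and a cleaner constant depending on $\gcd(k,q-1)$ rather than $k$, which would, if anything, slightly simplify the numerical bookkeeping downstream in Proposition \ref{weil_nilpotent_case} and Proposition \ref{non_nilpotent_case}.
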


\begin{proof}
For a proof the reader may refer to Theorem $5$A [page 160, \cite{Sc}]. Note that we modified the statement of Theorem $5$A to suit our needs; this modified statement is an immediate and a trivial consequence of Theorem $5$A. 
For the last part, note that 
$$
k^n \sqrt{q^{n-1}} \left( \frac{q}{q-1} \right)^{n/2} \leq k^n \sqrt{q^{n-1}} (\sqrt{2})^{n} \leq k^n \sqrt{q^{n-1}} k^n \leq k^{2n} \sqrt{q^{n-1}}.
$$
\end{proof}

Our goal is to find a solution $(x_1, \ldots, x_n)$ in $\F_q^n$ to $F$ such that the $k$th powers $x_i^k$ of the coordinates $x_i$ are  nonzero and mutually distinct.

\begin{prop}\label{weil_nilpotent_case}
Let $\lambda$ be any nonzero element in $\F_q$. Fix positive integers $n \geq 3$ and $k \geq 1$, and consider the polynomial 
$$
F(X_1, \ldots, X_n) := X_1^{k} + X_2^{k} + \ldots + X_n^k - \lambda \in  \F_q[X_1, \ldots, X_n].
$$
There exists a constant $C(k,n) :=$ {\normalfont max} $(2n^2, k^{4n/(n-8)})$ (which depends on both $k$ and $n$) such that for all $q >  C(k,n)$ there exists a solution $(x_1, \ldots, x_n)$ to $F = 0$ in $\F_q^n$ such that  
\begin{enumerate}[(a)]
\item \label{nonzero}
$x_i \neq 0$, for all $1 \leq i \leq n$;
\item \label{mutually_distinct}
$x_i^k \neq x_j^k$, for all $1 \leq i , j \leq n$ and $i \neq j$. 

\end{enumerate}

\end{prop}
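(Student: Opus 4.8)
The plan is to prove the existence of a good solution by a counting argument: I would estimate the total number $N$ of solutions of $F = 0$ via the Weil bound of Theorem \ref{weil_thm}, then bound from above the number of \emph{bad} solutions — those violating (\ref{nonzero}) or (\ref{mutually_distinct}) — and show the latter is strictly smaller than $N$ once $q > C(k,n)$. Since the case $k = 1$ is elementary (for large $q$ one simply chooses $n$ distinct nonzero field elements with prescribed sum $\lambda$), I would assume $k \geq 2$ and use the sharpened inequality (\ref{weil_inequality}). To apply Theorem \ref{weil_thm}, which is stated with constant term $-1$, I would first divide $\sum X_i^k = \lambda$ by the nonzero element $\lambda$; this puts it in the form $\sum (\lambda^{-1}) X_i^k = 1$ with all coefficients nonzero, so $N$ is unchanged and satisfies $\abs{N - q^{n-1}} \leq k^{2n}\sqrt{q^{n-1}}$.

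Next I would control the two families of bad solutions separately. For condition (\ref{nonzero}), a solution with $x_i = 0$ corresponds to a solution of $\sum_{\ell \neq i} X_\ell^k = \lambda$ in $n-1$ variables, whose number is $q^{n-2} + O(k^{2(n-1)}\sqrt{q^{n-2}})$ by the same theorem; summing over the $n$ choices of the vanishing coordinate gives a bound of order $n\,q^{n-2}$.

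Condition (\ref{mutually_distinct}) is the step I expect to be the main obstacle. Fixing a pair $i \neq j$, I would count solutions with $x_i^k = x_j^k$ by substituting $x_j^k = x_i^k$ into the equation to obtain $\sum_{\ell \neq i,j} X_\ell^k + 2 X_i^k = \lambda$, an equation in the $n-1$ variables $(x_\ell)_{\ell \neq j}$; for each of its solutions the coordinate $x_j$ is then constrained to the at most $k$ roots of $T^k = x_i^k$. When the characteristic is odd the coefficient $2$ is nonzero and Weil applies directly, giving at most $k\big(q^{n-2} + O(k^{2(n-1)}\sqrt{q^{n-2}})\big)$ such solutions; when $p = 2$ the term $2X_i^k$ vanishes, so $x_i$ becomes free while the remaining $n-2$ variables satisfy an equation with $\approx q^{n-3}$ solutions, yielding the same order $k\,q^{n-2}$. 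The delicate point is precisely this translation of the algebraic constraint $x_i^k = x_j^k$ into a count to which Theorem \ref{weil_thm} applies, together with keeping track of the extra factor $k$ coming from the fibers of the $k$th power map. Summing over the $\binom{n}{2}$ pairs bounds this family by order $n^2 k\,q^{n-2}$.

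Finally I would assemble the estimate: the number of good solutions is at least
$$ \big(q^{n-1} - k^{2n}\sqrt{q^{n-1}}\big) - n\,q^{n-2} - \binom{n}{2} k\,q^{n-2} - (\text{lower-order Weil errors}). $$
The leading good term $q^{n-1}$ dominates the bad main term of order $n^2 k\,q^{n-2}$ as soon as $q$ exceeds a fixed multiple of $n^2$, which accounts for the factor $2n^2$ in $C(k,n)$; dominating the Weil error $k^{2n}\sqrt{q^{n-1}}$ forces $q$ to exceed a suitable power of $k$, and collecting the remaining error terms yields the second requirement $q > k^{4n/(n-8)}$. Taking $q > C(k,n) = \max(2n^2,\, k^{4n/(n-8)})$ then makes the right-hand side strictly positive, producing a solution satisfying both (\ref{nonzero}) and (\ref{mutually_distinct}), as desired.
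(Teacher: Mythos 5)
Your strategy is the same as the paper's: apply Theorem \ref{weil_thm} to count all solutions, bound the bad ones (a vanishing coordinate, or $x_i^k=x_j^k$ for some pair) via the union bound and Weil in $n-1$ variables, and take $q$ large. In the two places where you deviate, you are actually \emph{more} careful than the paper: you treat $p=2$ in the pair-count (the paper's proof at that point invokes ``the characteristic of $\F_q$ is not $2$,'' a hypothesis that does not appear in the statement of the proposition), and you correctly keep the multiplicity coming from the fibers of $x\mapsto x^k$: dropping $x_j$ from a solution with $x_i^k=x_j^k$ is a map whose fibers have size up to $\gcd(k,q-1)\le k$, so the count for one pair is of order $k\,q^{n-2}$. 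The paper's proof treats this projection as if it were a bijection and bounds $N_1$ by $\binom{n}{2}\left(q^{n-2}+C_1\right)$ with no factor of $k$; that omission is precisely what lets it reach the constant $\max(2n^2,k^{4n/(n-8)})$.

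But this honesty creates a genuine gap in your final assembly. You assert that $q^{n-1}$ dominates the bad main term of order $n^2k\,q^{n-2}$ ``as soon as $q$ exceeds a fixed multiple of $n^2$.'' That is false: the comparison requires $q\gtrsim n^2k$, and $q>\max(2n^2,k^{4n/(n-8)})$ does not imply this. Concretely, take $n=100$, $k=10$, and $q\equiv 1 \pmod{10}$: the stated constant is about $2.3\times 10^4$, while your bound on the bad solutions is about $\binom{100}{2}\cdot 10\cdot q^{98}\approx 5\times 10^4\,q^{98}$, which exceeds $N_0\approx q^{99}$ for admissible $q$ near the constant; the union-bound lower bound is then negative and nothing is concluded. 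So your argument proves the proposition only after enlarging the constant, e.g.\ requiring in addition $q>2n^2k$ (which still depends only on $(k,n)$, and is all that the paper's later applications need), but it does not establish the statement with the constant exactly as written. In fairness, the same criticism applies to the paper itself: once the fiber factor $\gcd(k,q-1)$ is restored, its derivation of the stated constant breaks in the same way; and for $3\le n\le 8$ the exponent $4n/(n-8)$ is negative or undefined, so the stated constant is problematic there in any case.
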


\begin{proof} Let $p$ be the characteristic of $\F_q$. Let $k = l \cdot p$ for some integer $l \geq 0$. The $p$-th power map on $\F_q$ is an automorphism of $\F_q$, so the equation $X_1^l + \ldots + X_n^l - \lambda = 0$ has a solution satisfying the conditions \eqref{nonzero} and \eqref{mutually_distinct} if and only if its $p$-th power  
$$
(X_1^l + \ldots + X_n^l - \lambda)^p = X_1^k + X_2^k + \ldots + X_n^k - \lambda^p,
$$
has a solution satisfying the same conditions. Hence we may assume that $p \nmid k$.  
Let $N_0$ be the number of solutions $(x_1, \ldots, x_n)$ in $\F_q^n$  to  the equation $F(X_1, \ldots, X_n) = 0$. By Theorem \ref{weil_thm}, 
$$
q^{n-1} - C_0 \leq N_0 \leq q^{n-1} + C_0, \trm{ where }C_0 = k^{2n} \sqrt{q^{n-1}} .
$$

Let $N_1$ be the number of solutions $(x_1, \ldots, x_n)$ to the equation $F=0$  in $\F_q^n$ with $
x_{i}^k = x_{j}^k $ for some $i,j$. Then $(x_1, \ldots , x_i, \ldots, \widehat{x_j}, \ldots x_n)$ is a solution to the equation 
\begin{align*}
&X_{1}^k + \ldots  + 2 X_{i}^k + \ldots + \widehat{X_{j}^k} + \ldots + X_n^k = 1.
\end{align*}
where the $\widehat{\cdot}$ denotes that the term is dropped. By hypothesis that the characteristic of $\F_q$ is not $2$, so the above equation is an equation in $n-1$ indeterminates $X_1, \ldots, X_i, \ldots, \widehat{X_j}, \ldots, X_n$. Taking into account that there are ${n \choose 2}$ such pairs $(i,j)$, it follows from Theorem \ref{weil_thm} that $N_1$ satisfies the following inequality:

$$
{n \choose 2} \left(q^{n-2} - C_1 \right) \leq N_1  \leq {n \choose 2} \left( q^{n-2} +  C_1 \right), 
$$
where $C_1 = k^{2(n-1)} \sqrt{q^{n-2}} \leq k^{2n} \sqrt{q^{n-1}}$.
Similarly, let $N_2$ be the number of solutions $(x_1, \ldots, x_n)$ to the equation $F(X_1, \ldots, X_n)=0$ in $\F_q^n$ with $
x_i = 0 $  for some $1 \leq i \leq n$ . Then 
$(x_1, \ldots, \widehat{x_i}, \ldots x_n)$,
is a solution to the equation 
\begin{align*}
&X_{1}^k + \ldots  +  X_{i-1}^k + X_{i+1}^k + \ldots + X_n^k = 1.
\end{align*}
Again by Theorem \ref{weil_thm}, $N_2$ satisfies the inequality 
$$
{n \choose 1} \left( q^{n-2} - C_1 \right) \leq  N_2 \leq {n \choose 1}  \left(  q^{n-2} + C_1 \right)
$$
where $C_1$ is as above. Note that $N_0 - N_1 - N_2$ is \textit{not} the exact number of solutions satisfying the desired conditions, because there are solutions counted in $N_1$ and $N_2$, but the exact number of solutions is at least $N_0 - N_1 - N_2$. Therefore, we may find a lower bound for $N_0 - N_1 - N_2$, and one such bound is given by 

\begin{align*}
&\left(q^{n-1} - C_0 \right) - {n \choose 2} \left(q^{n-2} + C_1 \right) - {n \choose 1}\left( q^{n-2} + C_1 \right)\\
&= \left( q^{n-1} -  {n +1 \choose 2} q^{n-2} \right)  - \left( C_0 +{n +1 \choose 2}  C_1 \right) \\
& = q^{n-2} \left( q  - {n +1 \choose 2} \right)- \left( C_0 +{n +1 \choose 2}  C_1 \right)
\end{align*}
Let $x$ denote the first summand $q^{n-2} \left( q  - {n +1 \choose 2} \right)$ and $y$ denote the second summand $C_0 +{n +1 \choose 2}  C_1 $.
To find a lower bound for the above expression, we need to find an upper bound on $y$ for fixed $k$ and $n$. For the first term, note that since $2n^2  \geq {n+1 \choose 2}$
we require $q > 2n^2$.
With this choice of $q$, an upper bound of the second term is obtained using the bounds on $C_0$ and $C_1$ obtained above:
\begin{align*}
y&\leq  k^{2n} \sqrt{q^n}+ {n +1 \choose 2} k^{2n} \sqrt{q^{n-1}} \leq k^{2n} q^{n/2} (1 + 2n^2)\\ &\leq k^{2n} q^{n/2}(1 + q) \leq k^{2n} q^{(n/2) + 2}
\end{align*}
Since we want $x > y$, it suffices to choose $q$ such that $q > 2n^2$ and $q^{n-2} \geq  k^{2n} q^{(n/2) + 2}$. From the second inequality it follows that we require $q^{n-8} > k^{4n}$ and so it suffices to have $q > k^{4n/(n-8)}$. Thus it suffices to define $C(k,n) := \textrm{max} \left( 2n^2,  k^{4n/(n-8)} \right)$.
\end{proof}

\noindent The special case of Theorem \ref{weil_thm} with $n =2$ is required to write the Jordan block $J_{\alpha, n}$ where $\alpha$ is a primitive element of $\F_q$ as a sum of two $k$th powers. We will need  two solutions $(a,b)$ and $(c,d)$ in $\F_q \times \F_q$ are required to $X_1^k + X_2^k = \lambda$ such that $a^k \neq c^k$ and $b^k \neq d^k$ for any fixed $k \geq 2$.

\begin{prop}\label{non_nilpotent_case}
Let $k \geq 2$, and the characteristic of $\F_q$ not $2$. Let $\lambda$ be a nonzero element in $\F_q$. Consider the equation 
\begin{equation}\label{2_equation}
X_1^k  + X_2^k = \lambda.
\end{equation}
For all  $q > k^{16}$, there exists two solutions $(a,b)$ and $(c,d)$  to \ref{2_equation} in $\F_q \times \F_q$ such that $a^k \neq c^k$ and $b^k \neq d^k$.
\end{prop}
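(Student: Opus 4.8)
The plan is to reduce the statement to a counting problem and feed it into the Weil bound for $n=2$. First I would rescale: dividing equation \eqref{2_equation} by the nonzero $\lambda$ turns it into $\lambda^{-1} X_1^k + \lambda^{-1} X_2^k = 1$, which is exactly of the form covered by Theorem \ref{weil_thm} with $n=2$ and nonzero coefficients $a_1 = a_2 = \lambda^{-1}$. Since $k \geq 2$, inequality \eqref{weil_inequality} gives, for the number $N$ of solutions in $\F_q \times \F_q$, the estimate $\abs{N - q} \leq k^4 \sqrt q$, hence $N \geq q - k^4 \sqrt q$.

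The observation that collapses the two required inequalities into one is the following: if $(a,b)$ and $(c,d)$ both solve \eqref{2_equation}, then $b^k = \lambda - a^k$ and $d^k = \lambda - c^k$, so $a^k \neq c^k$ holds if and only if $b^k \neq d^k$. Thus it suffices to produce two solutions whose first coordinates have distinct $k$th powers; equivalently, to show that the value $u = x_1^k$ takes at least two distinct values as $(x_1, x_2)$ ranges over the solution set.

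I would establish this by a fiber count. Group the solutions according to the value $u = x_1^k$, and note that $u$ determines $x_2^k = \lambda - u$. For a fixed $u$, the number of $x_1$ with $x_1^k = u$ is at most $\gcd(k, q-1) \leq k$: writing $k = p^a m$ with $p \nmid m$ (where $p$ is the characteristic), the Frobenius power $x \mapsto x^{p^a}$ is a bijection, so $x_1^k = u$ has the same number of solutions as $y^m = u$, which is at most $\gcd(m, q-1) \leq k$. The identical bound applies to $x_2$, so each value of $u$ accounts for at most $k^2$ solutions. Hence if $u$ were constant across the whole solution set we would get $N \leq k^2$. But for $q > k^{16}$, writing $t = \sqrt q > k^8$, one checks $N \geq q - k^4 \sqrt q = t^2 - k^4 t = t(t - k^4) > k^{16} - k^{12} > k^2$, a contradiction. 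Therefore at least two distinct values of $u$ occur, furnishing solutions $(a,b)$ and $(c,d)$ with $a^k \neq c^k$, and hence with $b^k \neq d^k$, as required.

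The argument is elementary once the Weil estimate is available; the only mildly delicate points are the equivalence $a^k \neq c^k \Leftrightarrow b^k \neq d^k$ (which removes the need to control the two coordinates simultaneously) and the fiber bound when $p \mid k$, dispatched by the Frobenius factorization above. I note in passing that the hypothesis $\textrm{char}(\F_q) \neq 2$ is not actually used in this reduction, and I expect no serious obstacle.
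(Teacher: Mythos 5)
Your proof is correct and follows essentially the same route as the paper's: both invoke Theorem \ref{weil_thm} with $n=2$ and $a_1=a_2=\lambda^{-1}$ to get $\abs{N-q}\leq k^4\sqrt{q}$, and both bound the number of solutions sharing a given value of $X_1^k$ by $k^2$. Your one refinement is the observation that $b^k=\lambda-a^k$ forces the equivalence $a^k\neq c^k \Leftrightarrow b^k\neq d^k$, so a single fiber count with threshold $k^2+1$ suffices where the paper needs $2k^2+1$ to dodge both coordinates separately; this, and your remark that the hypothesis $\mathrm{char}(\F_q)\neq 2$ is never actually used, are both accurate but do not change the substance of the argument.
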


\begin{proof}
Let $a_1 =1/\lambda$ and $a_2 = 1/\lambda$ in Theorem \ref{weil_thm}, then the number of solutions $N$ to the equation \eqref{2_equation} satisfies the following inequality 
$
\abs{N - q} \leq k^4 \sqrt{q}.
$
 The number of solutions with fixed $X_1^k \in \F_q$ is at most $k^2$ and similarly with fixed $X_2^k$ is at most $k^2$. Thus for a pair of solutions $(a,b)$ and $(c,d)$ with $a^k \neq c^k$ and $b^k \neq d^k$ to exist we require $q - k^4 \sqrt{q} \geq 2k^2 + 1$. Since $k \geq 2$ it suffices to have $\sqrt{q} - k^4 > 0$ and $\sqrt{q} \geq k^4 \geq 2k^2 +1$. Thus if $q > k^{16}$ we have the existence of two solutions with the desired properties.
\end{proof}

\section{A Determinant Computation}\label{det_computation}

\begin{lem}\label{ele_com}
Let $\F_q$ be a finite field of characteristic not $2$, and $n$ be an integer $\geq 3$. Let $B_n$ be the matrix with the last row $[-y_1 \; -y_2 \; \ldots -y_{n-1} \; -1 ]$ and the last column $[z_1 \; z_2 \; \ldots \;  z_{n-2} \; 0 \; -1]^T$ and with $1$s along the super-diagonal except at the $((n-1), n)$-th position where it is $0$. Then, the characteristic polynomial of $B_n$ (in $\F_p[t]$) is
\begin{align*}
&t^n + t^{n-1} \\
&+ (y_1 z_1 + y_2 z_2 + \ldots + y_{n-2} z_{n-2}) t^{n-2}  \\
&+ (y_1 z_2 + y_2 z_3 + \ldots + y_{n-3} z_{n-2}) t^{n-3} \\
&+ \ldots \\
&+ (y_1 z_{n-3}  + y_2 z_{n-2} ) t^{2} \\
&+ (y_1 z_{n-2} ) t.
\end{align*}
\end{lem}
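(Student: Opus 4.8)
The plan is to strip off the one nearly-empty row of $tI - B_n$, reduce to a bordered bidiagonal determinant, and evaluate that by a Schur-complement computation. First I would write out $tI - B_n$ explicitly. Because the super-diagonal entry at $(n-1,n)$ is the exceptional $0$ and the $(n-1,n)$ entry of the last column is also $0$, the $(n-1)$-st row of $B_n$ vanishes, so the $(n-1)$-st row of $tI-B_n$ has a single nonzero entry, namely $t$ in position $(n-1,n-1)$. Expanding the determinant along this row gives
$$
\det(tI - B_n) = t \cdot \det C,
$$
where $C$ is the $(n-1)\times(n-1)$ matrix obtained by deleting row $n-1$ and column $n-1$.

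Next I would record the shape of $C$. Deleting the $(n-1)$-st column removes the super-diagonal $-1$ that occupied row $n-2$, so the surviving upper-left block is a genuine bidiagonal matrix and $C$ acquires the block form
$$
C = \begin{bmatrix} D & u \\ v & t+1 \end{bmatrix}, \qquad D = tI - S,
$$
where $D$ is the $(n-2)\times(n-2)$ upper bidiagonal matrix with $t$ on the diagonal and $-1$ on the super-diagonal ($S$ being the shift matrix with $1$'s on the super-diagonal), $u = -[z_1,\ldots,z_{n-2}]^T$ is the truncated last column, $v = [y_1,\ldots,y_{n-2}]$ is the truncated last row, and the corner entry is $t-(-1)=t+1$. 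Working in the field $\F_q(t)$ of rational functions, where $\det D = t^{n-2}\neq 0$ so $D$ is invertible, I would apply the Schur-complement identity
$$
\det C = \det(D)\,\bigl(t + 1 - v D^{-1} u\bigr) = t^{n-2}\bigl(t+1 - v D^{-1}u\bigr).
$$

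The inverse of $D = tI - S$ is given by the terminating Neumann series $D^{-1} = \sum_{l \ge 0} S^l / t^{l+1}$, so $(D^{-1})_{ij} = t^{-(j-i+1)}$ for $j \ge i$ and $0$ otherwise. Substituting $u_j = -z_j$ and $v_i = y_i$ yields
$$
v D^{-1} u = -\sum_{1 \le i \le j \le n-2} y_i z_j\, t^{-(j-i+1)},
$$
and hence
$$
\det(tI - B_n) = t\cdot t^{n-2}\Bigl(t+1+\sum_{1 \le i \le j \le n-2} y_i z_j\, t^{-(j-i+1)}\Bigr) = t^n + t^{n-1} + \sum_{1 \le i \le j \le n-2} y_i z_j\, t^{\,n-2-(j-i)}.
$$
Since both sides are polynomials in $t$ (and in the $y_i, z_j$), the identity derived over $\F_q(t)$ holds as a polynomial identity, independently of the characteristic.

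Finally I would regroup by the difference $d = j-i \ge 0$: the coefficient of $t^{\,n-2-d}$ is $\sum_{i=1}^{n-2-d} y_i z_{i+d}$, which reproduces the stated coefficients precisely — $d=0$ gives $y_1 z_1 + \cdots + y_{n-2} z_{n-2}$ as the coefficient of $t^{n-2}$, $d=1$ gives $y_1 z_2 + \cdots + y_{n-3} z_{n-2}$ as the coefficient of $t^{n-3}$, and so on down to $d = n-3$, which gives $y_1 z_{n-2}$ as the coefficient of $t$; there is no constant term because $d \le n-3$. The genuine substance here is the one-line reduction to $t\cdot\det C$ and the closed form for $D^{-1}$; the main hazard I expect is purely bookkeeping — keeping the indices straight through the column deletion that truncates the bidiagonal block, and verifying that the double sum over $i\le j$ regroups exactly into the displayed rows.
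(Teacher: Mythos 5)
Your proof is correct. Every step checks out: row $n-1$ of $B_n$ is indeed identically zero, so expanding $\det(tI-B_n)$ along that row gives $t\cdot\det C$ with no sign issue (the cofactor sign is $(-1)^{2(n-1)}=1$); after deleting row and column $n-1$, the upper-left block really is the full $(n-2)\times(n-2)$ bidiagonal matrix $tI-S$, since the deletion removes the stray $-1$ in row $n-2$; and the Schur complement together with the terminating Neumann series $(tI-S)^{-1}=\sum_{l\ge 0}S^l t^{-(l+1)}$ produces exactly the stated coefficients, including the absence of a constant term (consistent with $\det B_n=0$) and the disappearance of $y_{n-1}$. However, this is a genuinely different route from the paper's. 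The paper expands along the \emph{first column}, obtaining $\det(tI-B_n)=t\,\det B_{n-1}+(-1)^{n+1}y_1\det D_{n-1}$ where $B_{n-1}$ is the analogous matrix in the shifted variables $y_2,\ldots$ and $z_2,\ldots$, then argues by induction on $n$ and evaluates the auxiliary determinant $D_{n-1}$ by elementary row operations. Your argument trades that induction and its index bookkeeping for a single closed-form computation over $\F_q(t)$: the zero row is stripped off once, and all coefficients emerge simultaneously from the double sum, grouped by $d=j-i$. What the paper's route buys is that it stays entirely within polynomial arithmetic, never inverting a matrix over the rational function field; what yours buys is brevity, no induction hypothesis to formulate, and a transparent structural reason why the coefficient of $t^{n-2-d}$ is $\sum_i y_i z_{i+d}$. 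Your closing remark is also apt: the characteristic plays no role in the determinant identity (the characteristic $\neq 2$ hypothesis in the lemma is only relevant where the lemma is applied, not to its proof).
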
 

\begin{proof}
The characteristic polynomial of $B_n$ is the determinant of the following matrix: 
$$
\begin{bmatrix}
t & -1 & 0 & 0 & \ldots & 0 &  0 &-z_1 \\
0 & t & -1 & 0 & \ldots & 0 &  0 & -z_2 \\
0 & 0 & t & -1 & \ldots & 0 & 0 & -z_3 \\
\vdots & \vdots & \vdots & \vdots & \ldots & \vdots  & \vdots &\vdots \\
0 & 0 & 0 & 0 & \ldots & t & -1 & - z_{n-2} \\
0 & 0 & 0 & \ldots & \ldots & 0 & t & 0 \\
y_1 &y_2 & y_3 & \ldots & \ldots & y_{n-2} & {y_{n-1}} & t +1 
\end{bmatrix}
$$

Expanding with respect to the first column, the determinant is the sum $t  \det B_{n-1} + (-1)^{n+1} y_1  \det D_{n-1}$, where {\Small
$$
B_{n-1} = 
\begin{bmatrix}
t & -1 & 0 & \ldots & 0 &  0 & -z_2 \\
0 & t & -1 & \ldots & 0 & 0 & -z_3 \\
\vdots & \vdots & \vdots & \ldots & \vdots  & \vdots &\vdots \\
0 & 0 & 0 & \ldots & t & -1 & - z_{n-2} \\
0 & 0 & \ldots & \ldots & 0 & t & 0 \\
y_2 & y_3 & \ldots & \ldots & y_{n-2} & {y_{n-1}} & t +1 
\end{bmatrix}, 
D_{n-1} = \begin{bmatrix}
 -1 & 0 & 0 & \ldots & 0 &  0 &-z_1 \\
 t & -1 & 0 & \ldots & 0 &  0 & -z_2 \\
 0 & t & -1 & \ldots & 0 & 0 & -z_3 \\
\vdots & \vdots & \vdots & \ldots & \vdots  & \vdots &\vdots \\
 0 & 0 & 0 & \ldots & t & -1 & - z_{n-2} \\
0 & 0 & \ldots & \ldots & 0 & t & 0 \\
\end{bmatrix}
$$}

By induction on the size $n$ of $B_n$ beginning with the base case as $n =3$, it follows that the determinant of $B_{n-1}$ is 
\begin{align*}
&t^{n-1} + t^{n-2} + (y_2 z_2 + y_3 z_3 + \ldots + y_{n-3} z_{n-3}) t^{n-3} + \ldots + (y_2 z_{n-3}  + y_2 z_{n-2} ) t^{2} + (y_2 z_{n-2} ) t.
\end{align*}
As for the determinant of $D_{n-1}$ 
let us consider the notation $R_i \mapsto R_i + \alpha R_j$ denotes that row $R_i$ is changed by adding to it $\alpha$ times $R_j$. Now by means of elementary row operations (in order) $R_2 \mapsto R_2 + t R_1$, then $R_3 \mapsto R_3 + t R_2$ (where we have to use the altered second row in the previous step) \ldots, $R_{n-1} \mapsto R_{n-1} + t R_{n-2}$ we obtain the following matrix. Abusing notation we denote the resulting matrix again by $D_{n-1}$ which is of the form
$$
\begin{bmatrix}
 -1 & 0 & 0 & \ldots & 0 & -z_{1}\\
 0 & -1 & 0 & \ldots & 0 & -z_{2} - t z_1 \\
 0 & 0& -1 & \ldots & 0 & - z_3 - z_2 t -z_1 t^2 \\
\vdots & \vdots & \vdots & \ldots & \vdots &\vdots \\
0 & 0 & 0 & \ldots & 0 & -tz_{n-2} - t^2 z_{n-3}- \ldots - t^{n-2} z_1\\
\end{bmatrix}
$$
whose determinant is clearly $(-1)^{n-1} (tz_{n-2} + t^2 z_{n-3} + \ldots + t^{n-2} z_1)$.  It follows that the determinant of $D_{n-1}$ is $(-1)^{n-1} (tz_{n-2} + t^2 z_{n-3} + \ldots + t^{n-2} z_1)$. Finally, as we already note the the determinant of $B_n$ is $t \times \det B_{n-1} + (-1)^{n+1} y_1 (-1)^{n-1} (tz_{n-2} + t^2 z_{n-3} + \ldots + t^{n-2} z_1)$ which has the desired form in the statement.
\end{proof}

\section*{Acknowledgements}
\noindent It is a great pleasure to thank Michael Larsen for suggesting the question and for many useful discussions. We thank the referee(s) for their careful reading and suggestions.

\bibliographystyle{elsarticle-num} 

\end{document}